\newtheorem{theorem}{Theorem}
\newtheorem{proposition}{Proposition}
\newtheorem{corollary}{Corollary}
\newtheorem{lemma}{Lemma}
\newtheorem{remark}{Remark}
\newtheorem*{theorem*}{Theorem}
\newtheorem*{proposition*}{Proposition}
\newtheorem*{corollary*}{Corollary}
\newtheorem*{task*}{Task}
\newtheorem*{wish*}{Wish}
\theoremstyle{definition}
\newtheorem{definition}{Definition}
\newtheorem*{solution*}{Solution}
\newtheorem*{idea*}{Idea}
\title{Approximation of L-functions associated to Hecke cusp eigenforms}
\author[A. Huang]{An Huang}
\address{Department of Mathematics, Brandeis University,	415 South Street, Waltham, MA 02453, USA}
\email{anhuang@brandeis.edu}
\author[K. Spinelli]{Kamryn Spinelli}
\address{Department of Mathematics, Brandeis University,	415 South Street, Waltham, MA 02453, USA}
\email{kspinelli@brandeis.edu}
\begin{document}
	
\maketitle

\begin{abstract}
	We derive a family of approximations for L-functions of Hecke cusp eigenforms,
	according to a recipe first described by Matiyasevich for the Riemann xi
	function. We show that these approximations converge to the true L-function
	and point out the role of an equidistributional notion in ensuring the
	approximation is well-defined, and along the way we demonstrate error formulas
	which may be used to investigate analytic properties of the L-function
	and its derivatives, such as the locations and orders of zeros. Together
	with the Euler product expansion of the L-function, the family of approximations
	also encodes some of the key features of the L-function such as its functional
	equation. As an example, we apply this method to the L-function of the
	modular discriminant and demonstrate that the approximation successfully
	locates zeros of the L-function on the critical line. Finally, we derive
	via Mellin transforms a convolution-type formula which leads to precise
	error bounds in terms of the incomplete gamma function. This formula can
	be interpreted as an alternative definition for the approximation and sheds
	light on Matiyasevich's procedure.
\end{abstract}

\section{Introduction}
\label{sec1}

Given an (often number-theoretic or algebro-geometric) object such as a
number field, Hecke character, Galois representation, algebraic variety,
or modular form, one can often associate to it an L-function: this is a
holomorphic or meromorphic function defined in a right half-plane by a
Dirichlet series $\sum _{n=1}^{\infty }a_{n} n^{-s}$, and extended to the
rest of the complex plane by analytic continuation, where the
$a_{n}$ encode some numerical data about the original object (e.g. values
of a character, Fourier coefficients of a modular form, or point counts
of a variety over finite fields). The prototypical example is the Riemann
zeta function $\zeta (s) = \sum _{n=1}^{\infty }n^{-s}$.

There is broad interest in statements such as the BSD conjecture and the
Bloch-Kato conjecture which relate the values of an L-function
$\Lambda $ and its derivatives at certain special values to properties
of the object it came from. For this reason, one frequently needs to understand
the analytic properties of $\Lambda $. However, these special values often
lie in the half-plane where the Euler product expansion does not hold,
and therefore accessing this analytic data can be difficult. In this article,
we develop an approximation technique based on truncated Euler products
which could be used to investigate these properties. This technique first
appeared in \cite{Matiyasevich2017}, where it was applied to construct
approximations for the Riemann xi function, and was further analyzed in
\cite{NASTASESCU2022126344}, leading to precise error bounds for the approximation
in this case. Subsequently, the same procedure was applied to construct
approximations for Dirichlet L-functions \cite{alzergani2023family}, L-functions
of elliptic curves \cite{nastasescu2023visual}, and generating functions
$\zeta (s) \zeta (a-s)$ of divisor sums \cite{nastasescu2023riemann}.

The main contributions of the present article to the understanding of this
method are the following:
\begin{enumerate}
	\item The approximation technique, based on a regularization and symmetrization
	procedure, applies to L-functions of Hecke cusp eigenforms (Theorem \ref{thm:approximations-convergence}). We show an example which suggests
	that this could be a useful tool for investigating the analytic and number-theoretic
	data of these L-functions, such as the locations and orders of their zeros.
	\item The construction of the approximation technique, in particular the
	convergence of a certain sum over poles of the Euler factors, relies on
	an equidistributional aspect of a sequence involving logarithms of primes
	(Lemma \ref{lem:laurent-coefficients-bounded-growth}). As far as we know,
	the role of equidistribution in the approximation procedure has not been
	described elsewhere.
	\item We interpret the approximation through the lens of Mellin inversion
	and show that the approximation can be equivalently defined as a convolution-type
	integral involving a subseries of the Fourier expansion of the Hecke eigenform
	(Proposition \ref{prop:mellin-transform-approximation}). Furthermore, this
	definition leads to an error formula involving the incomplete gamma function
	(Corollary \ref{cor:error-bound-incomplete-gamma-function}) and explains
	why error formulas of this type have appeared in previous articles utilizing
	this approximation technique in other settings.
\end{enumerate}

We now briefly summarize the structure and main results of this article.
We will apply the approximation technique to L-functions of Hecke cusp
eigenforms; these are modular forms which are simultaneous eigenfunctions
for all the Hecke operators and whose Fourier expansions have zero constant
term. The Fourier coefficients of Hecke eigenforms are of special interest
for their number-theoretic properties. They are multiplicative in the number-theoretic
sense, as was observed by Ramanujan for the function $\tau (n)$, and satisfy
a bounded growth condition. We will recall the necessary ingredients for
the approximation in Section~\ref{sec:structure-L-factors} and the reader
is referred to
\cite{koblitz,apostol,ribet-petersson-ramanujan-conjecture} for more
background on Hecke eigenforms.

More precisely, let $\Gamma = \operatorname{SL}_{2}(\mathbb{Z})$,
$\Gamma _{0}(C)$, or $\Gamma _{1}(C)$ (see Section~\ref{sec:structure-L-factors} for the definition of these congruence subgroups),
and let $f$ be a modular form of weight $k$ for the group $\Gamma $. To
this $f$ is associated a completed L-function defined by the Euler product
$\Lambda (s) = g(s) \prod _{p=2}^{\infty }L_{p}(s)$ in a right half-plane
and extended to the rest of $\mathbb{C}$ by analytic continuation, which
is entire and satisfies $\Lambda (s) = \pm \Lambda (k-s)$. Here,
$L_{p}(s)$ and $g(s)$ are the local L-factors at each finite place and
at infinity, respectively, and each finite place factor $L_{p}(s)$ can
be written as the reciprocal of a quadratic polynomial in $p^{-s}$. To
construct the approximation, we start with a truncation
$\Lambda _{N}^{Euler}(s) = g(s) \prod _{p=2}^{p_{N}} L_{p}(s)$ of the Euler
product. Then we regularize the truncated Euler product by defining
$\Lambda _{N}^{ingoing}(s) = \Lambda _{N}^{Euler}(s) - \Lambda _{N}^{pp}(s)$,
where $\Lambda _{N}^{pp}(s)$ is defined as the sum of the principal parts
of the Laurent expansion of $\Lambda _{N}^{Euler}(s)$ at each of its (infinitely
many) poles. Along the way, in Lemma \ref{lem:laurent-coefficients-bounded-growth}, we make the novel observation
that the convergence of the infinite sum defining
$\Lambda _{N}^{pp}$ is closely related to the equidistribution of a sequence
involving ratios of logarithms of primes. The final step is to impose a
functional equation on the approximation, by setting
$\Lambda _{N}(s) = \Lambda _{N}^{ingoing}(s) \pm \Lambda _{N}^{ingoing}(k-s)$
according to the sign in the functional equation of $\Lambda $. The resulting
function $\Lambda _{N}$ is entire, satisfies the same functional equation
as $\Lambda $, and contains the data of the first $N$ Euler factors of
the original L-function. Since $\Lambda _{N}^{pp}$ is defined by an infinite
sum, in Section~\ref{sec:approximation-construction} we carefully verify
its convergence and bound its magnitude on certain contours in the complex
plane which avoid the poles of $\Lambda _{N}^{Euler}$, called sparse contours.

We employ the bounds of $\Lambda _{N}^{pp}$ on sparse contours in the following
Section~\ref{sec:integral-formulas} to represent $\Lambda _{N}$ as a contour
integral over a vertical line in $\mathbb{C}$. This integral representation
relies on two key facts: first, the poles of $\Lambda _{N}^{Euler}$ are
contained in a left half-plane; and second, the contribution of the principal
part $\Lambda _{N}^{pp}$ to the integrals diminishes to zero as one takes
larger sparse contours. We also derive an analogous integral formula for
$\Lambda $ using just its functional equation and decay properties, and
together the integral formulas lead to an error formula for the approximations:
\begin{theorem}[$=$ Theorem \ref{thm:error-formula}]
	Let $s_{0} \in \mathbb{C}$ be arbitrary. For
	$\sigma > \max (\operatorname{Re}(s_{0}), \operatorname{Re}(k - s_{0}),
	\frac{k+1}{2})$,
	\begin{equation*}
		\Lambda (s_{0}) - \Lambda _{N}(s_{0}) = \int _{\operatorname{Re}(s) =
			\sigma} (\Lambda (s) - \Lambda _{N}^{Euler}(s)) \left (
		\frac{1}{s-s_{0}} \pm \frac{1}{s-k+s_{0}} \right )\ ds.
	\end{equation*}
\end{theorem}
We then prove via a dominated convergence argument in Section~\ref{sec:convergence-of-approximations} the convergence of the approximations:
\begin{theorem}[$=$ Theorem \ref{thm:approximations-convergence}]
	For any $s_{0} \in \mathbb{C}$, $\Lambda _{N}(s_{0})$ converges to
	$\Lambda (s_{0})$ as $N \to \infty $.
\end{theorem}
As an easy consequence, we also observe that differentiating under the
integral sign yields analogous identities and convergence formulas for
derivatives of the L-functions.

\begin{remark}
	\label{rem1}
	The family of approximations can be thought of as an alternative characterization
	of the L-function of $f$ from which some of the fundamental properties
	are apparent. More precisely, for any $N$, the procedure described above
	defines $\Lambda _{N}$ as an explicit entire function without using analytic
	continuation, and then Theorem \ref{thm:approximations-convergence} implies
	that one can define
	$\Lambda (s) = \lim _{N \to \infty} \Lambda _{N}(s)$. From this definition,
	the functional equation of $\Lambda $ is apparent, because
	$\Lambda _{N}$ satisfies the functional equation by its construction. Additionally,
	the integral in Theorem \ref{thm:error-formula} is convergent for any
	$s_{0}$ and therefore this shows that $\Lambda $ is an entire function.
\end{remark}

In Section~\ref{sec:example-modular-discriminant}, we apply the approximation
method to the L-function of the modular discriminant, whose coefficients
are the well-known Ramanujan tau sequence. We observe that the data of
just the first three Euler factors approximates the true L-function closely
on a large region, and successfully predicts the locations of the first
several zeros of the L-function on the critical line with high accuracy.
This serves as evidence that the technique is of number-theoretic interest,
due to its capability to access analytic data such as locations and orders
of zeros of the L-function.

Finally, in Section~\ref{sec:mellin-transforms} we interpret the contour
integral formulas in terms of Mellin transforms and thus derive a convolution-type
formula for the completed L-function as well as the approximations. This
formula for $\Lambda $ is well-known (see e.g.
\cite[pp. 139-141]{koblitz}), and in the case of the approximations
$\Lambda _{N}$, it takes the following form:
\begin{proposition}[$=$ Proposition \ref{prop:mellin-transform-approximation}]
	For all $s \in \mathbb{C}$,
	\begin{equation*}
		\Lambda _{N}(s) = \int _{1}^{\infty }(t^{s-1} \pm t^{k-1-s}) f_{N}
		\left ( \frac{it}{\sqrt{C}} \right )\ dt
	\end{equation*}
	where $f_{N}$ is a certain subseries of the Fourier expansion of $f$.
\end{proposition}
This can be interpreted as an alternative definition for the approximation
L-function which avoids the technical regularization procedure and may
be simpler for numerical computations. Furthermore, it leads to a precise
error bound for the approximation in terms of the incomplete gamma function:
\begin{corollary}[$=$ Corollary \ref{cor:error-bound-incomplete-gamma-function}]
	\begin{equation*}
		\Lambda (s) - \Lambda _{N}(s) = \!\sum _{n=1}^{\infty} c_{n}^{(N)}
		\left ( \left ( \frac{2\pi n}{\sqrt{C}} \right )^{-s} \Gamma \left ( s,
		\frac{2\pi n}{\sqrt{C}} \right ) + (-1)^{P} \left (
		\frac{2\pi n}{\sqrt{C}} \right )^{-k+s} \!\Gamma \left ( k-s,
		\frac{2\pi n}{\sqrt{C}} \right ) \right )
	\end{equation*}
\end{corollary}
Error formulas of this form have appeared previously in other articles
utilizing Matiyasevich's technique
\cite{alzergani2023family,nastasescu2023visual,nastasescu2023riemann}.
This gives weight to the idea of understanding the approximation through
Mellin inversions and convolution-type integrals, and may contribute to
a deeper understanding of its error and convergence properties.

\section{Structure of L-factors}
\label{sec:structure-L-factors}

In this section, we briefly summarize some fundamental facts about the
L-function of a Hecke cusp eigenform and basic observations about the structure
of its Euler factors. Let $\Gamma $ be one of the congruence subgroups
\begin{align*}
	\Gamma _{0}(C) &= \left \{
	\begin{pmatrix}
		a & b
		\\
		c & d
	\end{pmatrix} \in \operatorname{SL}_{2}(\mathbb{Z}) \ \big |\ c
	\equiv 0 \operatorname{mod}C \right \}
	\\
	\Gamma _{1}(C) &= \left \{
	\begin{pmatrix}
		a & b
		\\
		c & d
	\end{pmatrix} \in \operatorname{SL}_{2}(\mathbb{Z}) \ \big |\ c
	\equiv 0 \operatorname{mod}C,\ a \equiv d \equiv 1 \operatorname{mod}C
	\right \}
\end{align*}
for some $C \in \mathbb{Z}_{\geq 1}$ (note that
$\Gamma _{1}(1) = \Gamma _{0}(1) = \operatorname{SL}_{2}(\mathbb{Z})$).
Let $f \in S_{k}(\Gamma , \chi )$ be a Hecke cusp eigenform of weight
$k$ for the group $\Gamma $, with nebentypus
$\chi : (\mathbb{Z}/ C\mathbb{Z})^{\times }\to \mathbb{C}^{\times}$ if
$\Gamma = \Gamma _{0}(C)$ (otherwise we assume $\chi $ is the trivial character).
Denote by $\sum _{n=1}^{\infty} a_{n} q^{n}$ the Fourier expansion of
$f$. The associated L-function of $f$ is defined in a half-plane by the
Dirichlet series $L(s) = \sum _{n=1}^{\infty }a_{n} n^{-s}$, and elsewhere
by analytic continuation; we focus on its completed version
$\Lambda (s) = g(s) L(s)$ where
$g(s) = C^{s/2} (2\pi )^{-s} \Gamma (s)$, which is an entire function.
In this case, there are the following well-known results on the structure
and properties of $\Lambda $ \cite{koblitz,apostol}.
%
\begin{proposition}[Euler factor decomposition]
	\label{prop:euler-factor-decomposition}
	$L$ decomposes as a product of Euler factors:
	$L(s) = \prod _{p} L_{p}(s) = \prod _{p} (1 - a_{p} p^{-s} + \chi (p) p^{k-1}
	(p^{-s})^{2})^{-1}$.
\end{proposition}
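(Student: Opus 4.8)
The plan is to derive the factorization from the multiplicativity of the Fourier coefficients $a_n$, which in turn comes from the fact that $f$ is a simultaneous Hecke eigenform. First I would normalize $f$ so that $a_1 = 1$ (harmless, since $a_1 \neq 0$ for a Hecke eigenform and rescaling $f$ only rescales $L$ by a constant); then the eigenvalue of the Hecke operator $T_n$ on $f$ is exactly $a_n$. Applying the standard composition law $T_m T_n = \sum_{d \mid \gcd(m,n)} \chi(d)\, d^{k-1}\, T_{mn/d^2}$ and reading off eigenvalues yields $a_m a_n = \sum_{d \mid \gcd(m,n)} \chi(d)\, d^{k-1}\, a_{mn/d^2}$. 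Two specializations are what I need: taking $\gcd(m,n) = 1$ gives multiplicativity $a_{mn} = a_m a_n$, and taking $m = p^r$, $n = p$ gives the prime-power recursion $a_{p^{r+1}} = a_p\, a_{p^r} - \chi(p)\, p^{k-1}\, a_{p^{r-1}}$ for $r \geq 1$ (when $p \mid C$ one has $\chi(p) = 0$ and this degenerates correctly to $a_{p^r} = a_p^r$).

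Next I would handle convergence. Since $f$ is a cusp form, Hecke's bound gives $a_n = O(n^{k/2})$, so $\sum_n a_n n^{-s}$ converges absolutely and locally uniformly on $\Re(s) > \tfrac{k}{2} + 1$. On this half-plane, absolute convergence together with the multiplicativity of the arithmetic function $n \mapsto a_n n^{-s}$ licenses rearranging the Dirichlet series into the Euler product $\prod_p \left( \sum_{r=0}^\infty a_{p^r} p^{-rs} \right)$, the product converging absolutely because $\sum_p \lvert a_p p^{-s} \rvert < \infty$ there.

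Finally I would evaluate each local factor. Fix a prime $p$ and put $X = p^{-s}$; a termwise check using the recursion shows
\[\left( 1 - a_p X + \chi(p)\, p^{k-1} X^2 \right) \sum_{r=0}^\infty a_{p^r} X^r = 1,\]
since the constant term is $1$, the coefficient of $X$ is $a_p - a_p = 0$, and for $r \geq 2$ the coefficient is $a_{p^r} - a_p\, a_{p^{r-1}} + \chi(p)\, p^{k-1} a_{p^{r-2}} = 0$. Hence the local series equals $(1 - a_p p^{-s} + \chi(p) p^{k-1}(p^{-s})^2)^{-1} = L_p(s)$, and assembling over all primes gives $L(s) = \prod_p L_p(s)$ on the half-plane of absolute convergence (the identity then persists wherever both sides make sense by analytic continuation, though the product representation itself holds only in the half-plane). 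The main obstacle is the input from Hecke theory—the operator composition law and hence the coefficient relations—while the power-series identity and the convergence bookkeeping are routine; since the paper recalls this as well known, I would cite \cite{koblitz, apostol} for the Hecke operator computation and present here only the power-series manipulation and the rearrangement argument.
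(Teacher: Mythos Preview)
The paper does not actually prove this proposition: it is stated as background and attributed to \cite{koblitz, apostol} without argument. Your sketch is the standard textbook proof found in those references---normalize to $a_1=1$, extract multiplicativity and the prime-power recursion from the Hecke-operator composition law, use the Hecke bound $a_n = O(n^{k/2})$ to justify absolute convergence and rearrangement, and then verify the local geometric-series identity---and it is correct as written. Since the paper treats the result as a citation rather than supplying a proof, there is nothing to compare; your write-up simply fills in what the references contain.
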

%
\begin{proposition}[Functional equation]
	\label{prop:functional-equation}
	$\Lambda $ satisfies the functional equation
	$\Lambda (s) = \break (-1)^{P} \Lambda (k-s)$ for some choice of sign
	$(-1)^{P}$.
\end{proposition}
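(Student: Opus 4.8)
The plan is to reprove this classical fact by Hecke's integral-representation argument, which also yields the entireness of $\Lambda$ as a byproduct. Since $f = \sum_{n\ge 1} a_n q^n$ is a cusp form, its Fourier coefficients grow at most polynomially, so for $\Re(s)$ sufficiently large the Dirichlet series $L(s) = \sum_n a_n n^{-s}$ converges absolutely; combining $\Gamma(s)(2\pi n)^{-s} = \int_0^\infty e^{-2\pi n y} y^{s-1}\,dy$ with Fubini gives the Mellin representation
\[ \Lambda(s) = C^{s/2}\int_0^\infty f(iy)\, y^{s-1}\, dy, \]
valid a priori in that right half-plane. Because $f$ vanishes at the cusp $i\infty$, the integrand decays exponentially as $y\to\infty$; its behaviour as $y\to 0^+$ will be controlled by modularity in the next step.

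Next I would split the integral at $y = C^{-1/2}$, the fixed point on the imaginary axis of the Fricke involution $z \mapsto -1/(Cz)$, and in the piece over $(0, C^{-1/2})$ substitute $y \mapsto 1/(Cy)$. The automorphy of $f$ under the Fricke involution --- namely $f(-1/(Cz)) = \varepsilon\, C^{k/2} (z/i)^k f(z)$ for a constant $\varepsilon$ of absolute value one, so that $f(i/(Cy)) = \varepsilon\, i^k\, C^{k/2} y^k f(iy)$ --- converts that piece into an integral over $(C^{-1/2}, \infty)$ of the same shape but with $s$ replaced by $k-s$. Writing $\Phi(s) := C^{s/2}\int_{C^{-1/2}}^\infty f(iy)\, y^{s-1}\, dy$, one obtains
\[ \Lambda(s) = \Phi(s) + (-1)^P\, \Phi(k-s), \qquad (-1)^P := \varepsilon\, i^k. \]
Since the constant term of $f$ is zero, $f(iy)$ decays exponentially as $y\to\infty$, so the integral defining $\Phi$ converges locally uniformly in $s$ and $\Phi$ is entire; hence the right-hand side is an entire function agreeing with $\Lambda$ on a half-plane, which shows $\Lambda$ is entire, and the manifest symmetry $\Lambda(k-s) = \Phi(k-s) + (-1)^P \Phi(s) = (-1)^P\big(\Phi(s) + (-1)^P \Phi(k-s)\big) = (-1)^P \Lambda(s)$ (using $(-1)^{2P} = 1$) is exactly the claimed functional equation.

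The one genuinely substantive point --- the main obstacle --- is the Fricke transformation law with a \emph{scalar} pseudo-eigenvalue $\varepsilon$ for which $(-1)^P = \varepsilon i^k$ is an honest sign. For $\Gamma = \SL_2(\Z)$ this is immediate, since $f(-1/z) = z^k f(z)$ gives $\varepsilon = 1$ and $(-1)^P = i^k = (-1)^{k/2}$ (recall $k$ is even in this case). For $\Gamma_0(C)$ and $\Gamma_1(C)$ it requires the theory of newforms: the Fricke involution maps $S_k(\Gamma_0(C),\chi)$ to $S_k(\Gamma_0(C),\bar\chi)$ and acts on a newform by a constant of modulus one, and for this constant to combine with $i^k$ into $\pm 1$ one uses that the Fourier coefficients $a_n$ (equivalently $\chi$) may be taken real, which is tacitly assumed here --- otherwise the functional equation relates $\Lambda$ to the completed L-function of the dual form rather than to $\Lambda$ itself. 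All of this is standard; see \cite{koblitz, apostol, ribet-petersson-ramanujan-conjecture}.
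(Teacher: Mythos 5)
The paper offers no proof of this proposition; it is stated as a standard fact and supported only by the citation \cite{koblitz, apostol}. Your Hecke integral argument is the textbook route and is essentially correct, so you have supplied a proof where the paper supplies none. It is also worth noting that the decomposition $\Lambda(s) = \Phi(s) + (-1)^P \Phi(k-s)$ you obtain (after the substitution $y = t/\sqrt{C}$, which turns $\Phi(s)$ into $\int_1^\infty f(it/\sqrt{C})\, t^{s-1}\, dt$) is exactly Proposition \ref{prop:mellin-transform-Lambda} of the paper, which the authors derive much later in Section \ref{sec:mellin-transforms} by a different route, namely by interchanging sum and integral in the contour formula of Corollary \ref{cor:integral-formula-lambda-coefficients}. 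So your argument front-loads that identity and gets both the functional equation and entireness out of it at once, whereas the paper takes the functional equation as given and later rediscovers the integral identity.

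Two small points. First, a minor arithmetic slip: from $f(-1/(Cz)) = \varepsilon C^{k/2}(z/i)^k f(z)$ and $z = iy$ one gets $f(i/(Cy)) = \varepsilon C^{k/2} y^k f(iy)$, not $\varepsilon i^k C^{k/2} y^k f(iy)$; the extra $i^k$ comes from the more common normalization $f(-1/(Cz)) = \varepsilon C^{k/2} z^k f(z)$, which is evidently what you have in mind given your final formula $(-1)^P = \varepsilon i^k$. Pick one convention and the bookkeeping is fine. Second, you correctly identify the one real subtlety here: for $\Gamma_0(C)$ with non-real nebentypus the Fricke involution sends $f$ to a multiple of $\bar f \in S_k(\Gamma_0(C), \bar\chi)$, so one only obtains the clean self-dual equation $\Lambda(s) = (-1)^P\Lambda(k-s)$ under a self-duality hypothesis (e.g.\ real $a_n$, or real $\chi$ up to twist). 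The paper's Proposition \ref{prop:functional-equation} tacitly assumes this; in full generality the right-hand side should involve the completed L-function of $\bar f$. Your proof makes this visible rather than sweeping it under the rug, which is a genuine improvement in precision.
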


From Proposition \ref{prop:euler-factor-decomposition}, one can analyze
the singularities of the L-factors. The Euler factor $L_{p}(s)$ can be
written as
$((1 - \alpha _{1,p} p^{-s}) (1 - \alpha _{2,p} p^{-s}))^{-1}$ where
$\alpha _{1,p} + \alpha _{2,p} = a_{p}$ and
$\alpha _{1,p} \alpha _{2,p} = \chi (p) p^{k-1}$, and the
$\alpha _{i,p}$ are constrained by the Petersson-Ramanujan conjecture:
%
\begin{theorem}[Petersson-Ramanujan conjecture \cite{ribet-petersson-ramanujan-conjecture,deligne,apostol}]
	\label{thm:petersson-ramanujan}
	If $\Gamma = \Gamma _{1}(C)$,
	$|\alpha _{1,p}| = |\alpha _{2,p}| = p^{(k-1)/2}$. Furthermore, owing to
	the direct sum decomposition
	$S_{k}(\Gamma _{1}(C)) = \bigoplus _{\chi }  S_{k}(\Gamma _{0}(C),
	\chi )$, the same is true for $\Gamma = \Gamma _{0}(C)$.
\end{theorem}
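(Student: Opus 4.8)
The plan is to split the statement into its genuinely deep ingredient and its routine bookkeeping. For $\Gamma = \Gamma_1(C)$ the claim is the Ramanujan--Petersson conjecture, which is a theorem of Deligne; I would cite it, but let me recall the shape of the argument. First one reduces to newforms: by the theory of newforms the primitive Euler factors, hence the roots $\alpha_{1,p}, \alpha_{2,p}$ of $X^2 - a_p X + \chi(p)p^{k-1}$ that one actually needs to control, come from newforms of level dividing $C$, so it suffices to bound $|\alpha_{i,p}|$ for a newform $f$ of weight $k$, level $C$, nebentypus $\chi$, at primes $p \nmid C$ (at $p \mid C$ one has $\chi(p) = 0$, the Euler factor degenerates to degree one, and only the inequality $|\alpha_{i,p}| \le p^{(k-1)/2}$ holds, which is all that is needed for the tail estimates in later sections).

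Next I would invoke the associated $\ell$-adic Galois representation: by Eichler--Shimura when $k = 2$ (inside the $\ell$-adic Tate module of the Jacobian of the modular curve) and by Deligne's construction when $k \ge 2$ (inside the degree-$(k-1)$ étale cohomology of a Kuga--Sato variety over $\Q$, i.e. a suitable compactification of the $(k-2)$-fold fibre product of the universal elliptic curve), there is a continuous semisimple representation $\rho_{f,\ell}\colon \Gal(\bar\Q/\Q) \to \operatorname{GL}_2(\bar\Q_\ell)$, unramified outside $C\ell$, with $\operatorname{tr}\rho_{f,\ell}(\mathrm{Frob}_p) = a_p$ and $\det\rho_{f,\ell}(\mathrm{Frob}_p) = \chi(p)p^{k-1}$ for $p \nmid C\ell$. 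Then $\alpha_{1,p}$ and $\alpha_{2,p}$ are precisely the eigenvalues of $\mathrm{Frob}_p$ acting through $\rho_{f,\ell}$. The decisive step, and the one I expect to be the main obstacle, is purity: since $\rho_{f,\ell}$ is realized in the degree-$(k-1)$ cohomology of a smooth projective variety over $\Q$ with good reduction at $p$, Deligne's proof of the Weil conjectures (the Riemann hypothesis over finite fields) forces every complex absolute value of every $\mathrm{Frob}_p$-eigenvalue to equal $p^{(k-1)/2}$, giving $|\alpha_{1,p}| = |\alpha_{2,p}| = p^{(k-1)/2}$. This purity input is exactly Deligne's theorem; I would cite it rather than reprove it. (The weight-one case lies outside this argument: there one appeals to Deligne--Serre to attach an Artin representation with finite image, whose Frobenius eigenvalues are roots of unity, so $|\alpha_{i,p}| = 1 = p^{(k-1)/2}$.)

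Finally, for $\Gamma = \Gamma_0(C)$ with nebentypus $\chi$, the reduction to the previous case is elementary. Given a Hecke cusp eigenform $f \in S_k(\Gamma_0(C),\chi)$, the direct sum decomposition $S_k(\Gamma_1(C)) = \bigoplus_\chi S_k(\Gamma_0(C),\chi)$ exhibits $f$ as a cusp form for $\Gamma_1(C)$ on which each diamond operator $\langle d\rangle$ acts by the scalar $\chi(d)$ and each Hecke operator $T_p$ acts by the same double-coset operator as on $S_k(\Gamma_0(C),\chi)$; hence $f$ is again a Hecke cusp eigenform for $\Gamma_1(C)$ with the same eigenvalues $a_p$. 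Since the roots $\alpha_{i,p}$ are determined by $a_p$, $\chi(p)$ and $k$ alone, they are unchanged, and the bound already established for $\Gamma_1(C)$ applies verbatim. I expect this paragraph to be entirely routine, with all the substance of the theorem concentrated in the purity statement of the previous step.
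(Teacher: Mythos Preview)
The paper does not give a proof of this theorem at all: it is stated with citations to Ribet, Deligne, and Apostol and immediately followed by a corollary, with no proof environment. In other words, the paper treats the Petersson--Ramanujan conjecture as a black-box input and simply quotes the result.

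Your proposal is a correct high-level sketch of the standard proof that lies behind those citations (reduction to newforms, the Eichler--Shimura/Deligne construction of the $\ell$-adic Galois representation in the cohomology of a Kuga--Sato variety, and the purity of Frobenius eigenvalues via Deligne's proof of the Weil conjectures), together with the elementary reduction from $\Gamma_0(C)$ to $\Gamma_1(C)$ via the nebentypus decomposition. This is entirely appropriate as an explanation of \emph{why} the cited theorem holds, but it is far more than the paper itself does; the paper's ``proof'' consists solely of the citation. So your write-up is not wrong, but it is supplying content the paper deliberately outsources to the literature. If you want to match the paper, a single sentence citing Deligne (and Deligne--Serre for weight one, if you want to be careful) suffices.
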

%
\begin{corollary}
	\label{cor1}
	The Dirichlet series representation of $L$ converges in the half-plane
	$\operatorname{Re}(s) > \frac{k+1}{2}$.
\end{corollary}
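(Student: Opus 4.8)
The plan is to bound the Fourier coefficients $a_n$ using the Petersson--Ramanujan bound together with the multiplicativity of the coefficients of a Hecke eigenform, and then to compare the associated Dirichlet series with a shifted copy of $\zeta(s)^2$.

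First I would unpack the local factors. From the factorization $L_p(s) = ((1-\alpha_{1,p}p^{-s})(1-\alpha_{2,p}p^{-s}))^{-1}$ of Proposition \ref{prop:euler-factor-decomposition}, expanding each factor as a product of geometric series gives
\[L_p(s) = \sum_{m \geq 0} \left( \sum_{i+j=m} \alpha_{1,p}^{\,i}\, \alpha_{2,p}^{\,j} \right) p^{-ms},\]
so that $a_{p^m} = \sum_{i+j=m} \alpha_{1,p}^{\,i} \alpha_{2,p}^{\,j}$. Since $f$ is a simultaneous Hecke eigenform, its coefficients are multiplicative, $a_{mn} = a_m a_n$ for $\gcd(m,n) = 1$, hence $a_n = \prod_{p^m \| n} a_{p^m}$.

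Next I would invoke the Petersson--Ramanujan conjecture: $|\alpha_{1,p}| = |\alpha_{2,p}| = p^{(k-1)/2}$. Applying the triangle inequality to the expression for $a_{p^m}$ yields $|a_{p^m}| \leq (m+1)\, p^{m(k-1)/2}$, and taking the product over the prime-power factorization of $n$ gives $|a_n| \leq d(n)\, n^{(k-1)/2}$, where $d(n)$ denotes the number of divisors of $n$. Consequently, for real $\sigma$,
\[\sum_{n=1}^{\infty} |a_n|\, n^{-\sigma} \;\leq\; \sum_{n=1}^{\infty} d(n)\, n^{-(\sigma - (k-1)/2)} \;=\; \zeta\!\left( \sigma - \tfrac{k-1}{2} \right)^{\!2},\]
which is finite exactly when $\sigma - \tfrac{k-1}{2} > 1$, i.e.\ $\sigma > \tfrac{k+1}{2}$. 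Absolute convergence of $\sum a_n n^{-s}$ on each line $\Re(s) = \sigma$ with $\sigma > \tfrac{k+1}{2}$ is locally uniform in the half-plane $\Re(s) > \tfrac{k+1}{2}$, which gives the claim (and in fact holomorphy there).

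There is no deep obstacle here: the substantive input, the Petersson--Ramanujan bound, is quoted as a black box. The only points needing a little care are justifying the closed form $a_{p^m} = \sum_{i+j=m}\alpha_{1,p}^{\,i}\alpha_{2,p}^{\,j}$ and the multiplicativity from the eigenform hypothesis, and observing that term-by-term domination by $d(n)\,n^{(k-1)/2}$ already yields the sharp abscissa $\tfrac{k+1}{2}$ via the comparison with $\zeta(w)^2$ — one does not even need the cruder estimate $d(n) = O_\varepsilon(n^\varepsilon)$, though that would give the (slightly weaker) statement with an arbitrarily small loss in the exponent just as well.
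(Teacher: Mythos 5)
Your proof is correct and is exactly the standard argument the paper implicitly relies on: the corollary is stated without proof immediately after the Petersson--Ramanujan theorem, and the intended justification is precisely the bound $|a_n| \leq d(n)\, n^{(k-1)/2}$ obtained from multiplicativity and the local factorization, followed by comparison with $\zeta(\sigma - (k-1)/2)^2$. One small note for cleanliness: the identity $a_{p^m} = \sum_{i+j=m}\alpha_{1,p}^i\alpha_{2,p}^j$ and the multiplicativity $a_{mn}=a_m a_n$ for $\gcd(m,n)=1$ are purely algebraic consequences of the Hecke eigenform relations (e.g.\ $a_{p^{m+1}} = a_p a_{p^m} - \chi(p)p^{k-1}a_{p^{m-1}}$), so there is no circularity in invoking them before convergence of the Euler product has been established --- you gesture at this but it is worth saying explicitly.
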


The following lemma is a straightforward application of Theorem \ref{thm:petersson-ramanujan} and L'Hopital's Rule.
%
\begin{lemma}
	\label{lem:Lp-poles}
	Let $p$ be a prime, and let $\alpha $ be one of the reciprocal roots of
	$L_{p}(s)^{-1}$. Then $(1 - \alpha p^{-s})^{-1}$ has infinitely many poles
	on the vertical line $\operatorname{Re}(s) = \frac{k-1}{2}$, all equally
	spaced on this line by distance $\frac{2\pi}{\log p}$. The residue at each
	of these poles is $\frac{1}{\log p}$. Therefore $L_{p}(s)$ either has double
	poles on the line $\operatorname{Re}(s) = \frac{k-1}{2}$ spaced by distance
	$\frac{2\pi}{\log p}$, or it has two collections of non-coinciding simple
	poles on the line $\operatorname{Re}(s) = \frac{k-1}{2}$, each spaced by
	$\frac{2\pi}{\log p}$.
\end{lemma}
%
\begin{lemma}
	\label{lem:coinciding-poles}
	Any pair of distinct L-factors has at most four common poles.
\end{lemma}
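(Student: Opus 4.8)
The plan is to reduce this to an elementary statement about arithmetic progressions with incommensurable steps. Fix two distinct primes $p \neq q$. Applying Lemma \ref{lem:Lp-poles} to each nonzero reciprocal root $\alpha_{1,p}, \alpha_{2,p}$ of $L_p(s)^{-1}$, every pole of $L_p$ lies on the vertical line $\Re(s) = \frac{k-1}{2}$, and the full set of poles of $L_p$ is the union of \emph{at most two} arithmetic progressions in the imaginary coordinate, each with common difference $\frac{2\pi}{\log p}$ (only one such progression, or none, when a root vanishes at a bad prime or the two roots coincide). The same holds for $L_q$ with common difference $\frac{2\pi}{\log q}$. Any common pole of $L_p$ and $L_q$ must therefore lie in the intersection of one progression arising from $L_p$ with one arising from $L_q$, and there are at most $2 \times 2 = 4$ such pairs of progressions; so it suffices to show that each pair meets in at most one point.

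For a single pair, write the two progressions (via their imaginary parts, since all the poles share the real part $\frac{k-1}{2}$) as $I_0 + \frac{2\pi}{\log p}\,\Z$ and $J_0 + \frac{2\pi}{\log q}\,\Z$. If they had two distinct points in common, differencing the two coincidences would yield nonzero integers $m, n$ with $\frac{2\pi}{\log p}\, m = \frac{2\pi}{\log q}\, n$, i.e. $\frac{\log q}{\log p} = \frac{n}{m} \in \Q$. This is impossible: a rational relation $\frac{\log q}{\log p} = \frac{n}{m}$ with $m, n \in \Z_{\geq 1}$ forces $q^m = p^n$, contradicting unique factorization for the distinct primes $p$ and $q$. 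Hence two such progressions share at most one point, and combining over the at most four pairs of progressions yields at most four common poles.

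I expect no serious obstacle here; the content is entirely in the irrationality of $\log q / \log p$, which is immediate from unique factorization. The one point requiring care is the bookkeeping in the degenerate cases — a reciprocal root equal to zero at a bad prime, or a repeated root producing a single double-pole family instead of two simple ones — but these cases only reduce the number of progressions, and hence can only decrease the count, so the bound of four still holds. (Whether four common poles is actually attained for specific eigenforms is a separate arithmetic question that plays no role in the sequel.)
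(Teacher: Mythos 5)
Your argument is correct and follows essentially the same route as the paper: both proofs identify the poles of $L_p$ and $L_q$ as unions of at most two arithmetic progressions on the line $\Re(s) = \frac{k-1}{2}$ with steps $\frac{2\pi}{\log p}$ and $\frac{2\pi}{\log q}$ respectively, pair them up ($2 \times 2 = 4$ pairs), and invoke the irrationality of $\frac{\log p}{\log q}$ to conclude that each pair of progressions meets in at most one point. You additionally spell out the irrationality via unique factorization and note the degenerate cases (vanishing or repeated reciprocal roots), which the paper leaves implicit, but the substance is the same.
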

\begin{proof}
	Let $p \neq q$ be distinct primes. The scenario giving the most coinciding
	poles is when $L_{p}$ and $L_{q}$ each have two subsets of non-coinciding
	poles $D_{p, 1}, D_{p,2}$ and $D_{q,1}, D_{q,2}$ positioned on the vertical
	lines $\operatorname{Re}(s) = \frac{k-1}{2}$ with spacing
	$\frac{2\pi}{\log p}$ and $\frac{2\pi}{\log q}$ respectively. In this scenario,
	each pair of $D_{p,i}$ and $D_{q,j}$ can share at most one common pole
	because $\frac{\log p}{\log q}$ is irrational. As there are four possible
	choices of the\vspace{1.5pt} pair $(i, j)$, the statement follows.
\end{proof}

\section{Approximation of $\Lambda (s)$ by truncated Euler products}
\label{sec:approximation-construction}

We now describe a procedure for constructing an approximation of the completed
L-function $\Lambda $, mimicking the technique in
\cite{Matiyasevich2017,alzergani2023family,nastasescu2023visual,nastasescu2023riemann}.
Pick $N \geq 1$ and consider the collection of primes up to $p_{N}$. The
truncated Euler product is defined by
$\Lambda _{N}^{Euler}(s) = g(s) \prod _{p=2}^{p_{N}} L_{p}(s)$, and it
differs from the full L-function $\Lambda $ in two crucial respects: it
is not entire, and does not satisfy the functional equation. We will proceed
in two steps: first regularizing the truncated Euler product so that it
is entire, and then imposing the functional equation.

Following Matiyasevich's recipe, we begin by subtracting off the singular
part of the Laurent expansion of $\Lambda _{N}^{Euler}$ at each pole; each
of these poles is of finite order because $\Lambda _{N}^{Euler}$ is a product
of finitely many Euler factors, each with either simple or double poles.
Set $D_{N} = \bigcup _{p=2}^{p_{N}} \{\text{poles of $L_{p}$}\}$ (see Lemma \ref{lem:Lp-poles}) and
$S_{N} = D_{N} \cup \{\text{poles of $g$}\} = D_{N} \cup \mathbb{Z}_{
	\leq 0}$. For each $s_{\star }\in S_{N}$, we write the Laurent expansion
of $\Lambda _{N}^{Euler}$ at $s = s_{\star}$ as
$\Lambda _{N}^{Euler}(s) = \sum _{k = -k_{s_{\star}}}^{\infty }\rho _{s_{
		\star}}^{(k)} (s-s_{\star})^{k}$. We then define the principal part of
$\Lambda _{N}^{Euler}$ to be
\begin{equation*}
	\Lambda _{N}^{pp}(s) = \sum _{s_{\star }\in S_{N}} \sum _{k = -k_{s_{
				\star}}}^{-1} \rho _{s_{\star}}^{(k)} (s-s_{\star})^{k},
\end{equation*}
so that
\begin{equation*}
	\Lambda _{N}^{ingoing}(s) = \Lambda _{N}^{Euler}(s) - \Lambda _{N}^{pp}(s)
\end{equation*}
is a well-defined, entire function. The final step of constructing the
approximation is to set
\begin{equation*}
	\Lambda _{N}(s) = \Lambda _{N}^{ingoing}(s) + (-1)^{P} \Lambda _{N}^{ingoing}(k-s)
\end{equation*}
according to the sign of the functional equation for $\Lambda $. The result
is that $\Lambda _{N}$ is an entire function that satisfies the same functional
equation as $\Lambda $ and, informally speaking, carries the information
of the first $N$ Euler factors of $L$.

The main goals of this section are to explain the convergence of
$\Lambda _{N}^{pp}(s)$ -- ensuring that $\Lambda _{N}$ is well-defined
-- and furthermore show that $\Lambda _{N}^{pp}$ satisfies a decay property
on certain well-behaved contours in the complex plane. For readability,
we will omit most of the technical details underlying these arguments and
only briefly summarize the main ideas of the proofs. The interested reader
is encouraged to refer to Appendix~\ref{app:detailed-arguments} for complete proofs.
We begin by bounding the growth of the Laurent coefficients of the L-factors,
which will be crucial for both of our objectives. The following lemma shows
that this is a consequence of the equidistribution of a sequence involving
ratios of logarithms of primes, and its proof is the only one we will include
here in detail due to the novelty of the equidistribution observation.

\begin{lemma}
	\label{lem:laurent-coefficients-bounded-growth}
	Let $p, q$ be primes and let $s_{\star}$ be a pole of
	$\frac{1}{1 - \alpha _{p} p^{-s}}$. Denote by $\rho ^{(k)}$ the $k$-th
	Laurent coefficient of $\frac{1}{1 - \alpha _{q} q^{-s}}$ at
	$s = s_{\star}$. If $s_{\star}$ is also a pole of
	$\frac{1}{1 - \alpha _{q} q^{-s}}$ or if $p = q$, then
	$|\rho ^{(k)}|$ is bounded above by a constant depending only on $q$ and
	$k$. If $p \neq q$ and $s_{\star}$ is not a pole of
	$\frac{1}{1 - \alpha _{q} q^{-s}}$, then $|\rho ^{(k)}|$ is bounded above
	by a constant depending on $q$ and $k$, times $|s_{\star}|^{2k+2}$.
\end{lemma}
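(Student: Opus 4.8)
The plan is to compute the Laurent coefficients of $\frac{1}{1 - \alpha_q q^{-s}}$ at a pole $s_\star$ explicitly and then estimate them in the three cases. The crucial observation is that the singularity of $\frac{1}{1-\alpha_q q^{-s}}$ is governed entirely by the zero of the analytic function $h(s) = 1 - \alpha_q q^{-s}$, which has only simple zeros (since $h'(s) = \alpha_q q^{-s} \log q \neq 0$ everywhere). So if $s_\star$ is itself a pole of $\frac{1}{1-\alpha_q q^{-s}}$, then locally $\frac{1}{1-\alpha_q q^{-s}} = \frac{1}{h(s)}$ has a simple pole, and its Laurent coefficients are the Taylor coefficients of $\frac{s - s_\star}{h(s)}$ divided by appropriate powers; since $h$ and all its derivatives at $s_\star$ depend only on $q$ (the relevant data being $q^{-s_\star} = 1/\alpha_q$ of modulus $q^{-(k-1)/2}$ and the factors $\log q$), one gets a bound depending only on $q$ and the coefficient index. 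The case $p = q$ is similar: either $s_\star$ coincides with one of the poles of $\frac{1}{1-\alpha_q q^{-s}}$ on the line $\Re(s) = \frac{k-1}{2}$, handled as above, or $\frac{1}{1-\alpha_q q^{-s}}$ is holomorphic at $s_\star$, in which case the Laurent coefficients are Taylor coefficients and the spacing of poles by $\frac{2\pi}{\log q}$ bounds the distance from $s_\star$ to the nearest singularity of $\frac{1}{1-\alpha_q q^{-s}}$ below, so a Cauchy estimate on a fixed-radius disc again gives a bound depending only on $q$ and $k$.

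The substantive case is $p \neq q$ with $s_\star$ not a pole of $\frac{1}{1-\alpha_q q^{-s}}$. Here $\frac{1}{1-\alpha_q q^{-s}}$ is holomorphic at $s_\star$, so $\rho^{(k)}$ is its $k$-th Taylor coefficient at $s_\star$. By the Cauchy integral formula, $|\rho^{(k)}| \leq r^{-k} \max_{|s - s_\star| = r} \left| \frac{1}{1-\alpha_q q^{-s}} \right|$ for any radius $r$ smaller than the distance from $s_\star$ to the nearest pole of $\frac{1}{1-\alpha_q q^{-s}}$. The difficulty is that this distance can be arbitrarily small: $s_\star$ lies on $\Re(s) = \frac{p-1}{2}$... rather, on $\Re(s) = \frac{k-1}{2}$ (same vertical line as the $q$-poles), and by the irrationality of $\frac{\log p}{\log q}$ the imaginary part of $s_\star$ can come very close to that of a $q$-pole. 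I would quantify this by a lower bound on $|1 - \alpha_q q^{-s_\star}|$: writing $q^{-s_\star} = q^{-(k-1)/2} e^{-i t_\star \log q}$ and $\alpha_q = q^{(k-1)/2} e^{i\theta}$, one has $|1 - \alpha_q q^{-s_\star}| = |1 - e^{i(\theta - t_\star \log q)}|$, which is $\asymp \operatorname{dist}(t_\star \log q - \theta, 2\pi\Z)$. The key input I expect to need is an \emph{effective} irrationality measure (a Baker-type linear forms in logarithms estimate, or at minimum the explicit transcendence/irrationality bounds that force $\left| \frac{\arg\alpha_q}{\log q} - \frac{\arg\alpha_p}{\log p} - \frac{2\pi n}{\log q}\right|$ to decay no faster than polynomially in $n$, hence in $|s_\star|$), yielding $|1 - \alpha_q q^{-s_\star}| \gtrsim |s_\star|^{-C}$ for some constant $C$ depending on $p,q$. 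Then choosing $r$ proportional to this distance and controlling $\max_{|s-s_\star|=r}\left|\frac{1}{1-\alpha_q q^{-s}}\right|$ — which by a similar argument is $\lesssim |s_\star|^{C}$ — the Cauchy estimate gives $|\rho^{(k)}| \lesssim r^{-k} \cdot |s_\star|^{C} \lesssim |s_\star|^{(C+1)k + C}$, and tuning the exponents (or being slightly more careful with a disc of radius comparable to the pole-spacing $\frac{2\pi}{\log q}$ combined with the zero-order-one lower bound on $h$) produces the claimed $|s_\star|^{2k+2}$.

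The main obstacle, then, is making the quantitative gap estimate between $s_\star$ and the $q$-poles precise enough to land exactly at the exponent $2k+2$ rather than some unspecified polynomial power; I would look for a clean argument using only that $h(s) = 1 - \alpha_q q^{-s}$ is entire of order $1$ with simple zeros, so that $\frac{1}{h(s)}$ on a disc of radius $\frac{\pi}{\log q}$ around $s_\star$ (which contains at most one $q$-pole) satisfies $\left|\frac{1}{h(s)}\right| \lesssim \frac{1}{|s - s_{\mathrm{pole}}|}$ with an implied constant depending only on $q$, and then a residue/partial-fractions bookkeeping converts the possible near-pole into a factor polynomial in $|s_\star|$ of controlled degree. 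A secondary point to be careful about is the genuinely degenerate subcase where $\alpha_q = \bar\alpha_q$'s argument makes a $q$-pole and a $p$-pole \emph{exactly} coincide — but this is precisely excluded by the hypothesis "$s_\star$ is not a pole of $\frac{1}{1-\alpha_q q^{-s}}$," so no separate treatment is needed.
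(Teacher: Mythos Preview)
Your treatment of the easy cases (periodicity under $s \mapsto s + \frac{2\pi i}{\log q}$) matches the paper. For the hard case $p \neq q$, you correctly isolate the issue as a lower bound on $|1 - \alpha_q q^{-s_\star}|$, but your proposed route through Baker-type linear-forms-in-logarithms is much heavier than what the paper does and, as you yourself flag, does not produce the exponent $2k+2$: Baker would give $|1 - \alpha_q q^{-s_\star}| \gtrsim |s_\star|^{-C}$ for some unspecified $C$, hence $|\rho^{(k)}| \lesssim |s_\star|^{C(k+1)}$. That is enough for the downstream application (any polynomial growth is beaten by the Schwartz decay of $g$ in Lemma~\ref{lem:pp-Lpn-convergent}), but it does not prove the lemma as stated.

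The paper's argument is elementary and lands exactly on the exponent $2$. Writing $s_\star = \frac{k-1}{2} + i(I_p + \frac{2\pi n}{\log p})$, the quantity $|1 - \alpha_q q^{-s_\star}|$ reduces (up to constants in $p,q$) to $\{n\frac{\log q}{\log p}\}$, the distance to the nearest integer. The claim is then that $n^2\{n\alpha\}$ is bounded away from zero whenever $\alpha$ is irrational, and this follows from \emph{equidistribution} alone: if $a_n = \{n\alpha\}$ and $n_k^2 a_{n_k} \to 0$ along a subsequence, then with $f(x) = x^{-1/2}$ one has $\frac{1}{n_k}\sum_{n \le n_k} f(a_n) \ge \frac{1}{n_k} f(a_{n_k}) > \epsilon^{-1/2}$ for arbitrary $\epsilon$, contradicting $\frac{1}{N}\sum_{n \le N} f(a_n) \to \int_0^1 f = 2$. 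This gives $|1 - \alpha_q q^{-s_\star}| \gtrsim |s_\star|^{-2}$; since the $k$-th derivative of $1/h$ is a polynomial in $h', h'', \ldots$ (bounded in terms of $q$ only) over $h^{k+1}$, one obtains $|\rho^{(k)}| \lesssim |s_\star|^{2(k+1)}$ directly. Your ``entire of order one / partial fractions'' fallback does not by itself supply this lower bound on $|h(s_\star)|$; the equidistribution step is where the content lies.
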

\begin{proof}
	If $s_{\star}$ is a pole of $\frac{1}{1 - \alpha _{q} q^{-s}}$, then the
	statement holds because this function is invariant under
	$s \mapsto s + \frac{2\pi n}{\log q} i$, $n \in \mathbb{Z}$, meaning that
	the coefficients of the Laurent expansion about a pole do not depend
	on the pole in question. The situation is similar when $s_{\star}$ is not
	a pole of $\frac{1}{1 - \alpha _{q} q^{-s}}$, but $p = q$: in this case,
	the poles of $\frac{1}{1 - \alpha _{p} p^{-s}}$ and
	$\frac{1}{1 - \alpha _{q} q^{-s}}$ are both equally spaced on the line
	$\operatorname{Re}(s) = \frac{k-1}{2}$ by distance
	$\frac{2\pi}{\log p}$, and both of these expressions are invariant under
	$s \mapsto s + \frac{2\pi n}{\log p} i$. Therefore the Laurent coefficients
	of $\frac{1}{1 - \alpha _{q} q^{-s}}$ at $s_{\star}$ will be the same regardless
	of which pole of $\frac{1}{1 - \alpha _{p} p^{-s}}$ is chosen.
	
	The most delicate case is when $p \neq q$ and $s_{\star}$ is not a pole
	of $\frac{1}{1 - \alpha _{q} q^{-s}}$, and in this case the bound involves
	an equidistributional idea. Taking successive derivatives of
	$\frac{1}{1 - \alpha _{q} q^{-s}}$, the numerators can be easily bounded
	above, so the main point is to prove that
	$|\frac{1}{1 - \alpha _{q} q^{-s_{\star}}}|$ is bounded above by a constant
	times $|s_{\star}|^{2}$, or equivalently that
	$|1 - \alpha _{q} q^{-s_{\star}}| |s_{\star}|^{2}$ is bounded below by
	a constant greater than $0$. Writing
	$\alpha _{q} = q^{\frac{k-1}{2} + I_{q} i}$ and remembering that the poles
	of $\frac{1}{1 - \alpha _{p} p^{-s}}$ are of the form
	$s_{\star }= \frac{k-1}{2} + (I_{p} + \frac{2\pi n}{\log p}) i$ where
	$n \in \mathbb{Z}$, we have
	$1 - \alpha _{q} q^{-s_{\star}} = 1 - q^{(I_{q} - I_{p} -
		\frac{2\pi n}{\log p}) i} = 1 - \exp \left ( ((I_{q} - I_{p}) \log q -
	2\pi n \frac{\log q}{\log p}) i \right )$. This quantity is small whenever
	the exponent is close to a multiple of $2\pi i$, but we will show that
	it can't get too close to such a multiple too fast.
	
	Note that for any $\theta $,
	$|1 - \exp (i\theta )| \geq \{\frac{\theta}{2\pi}\}$ where
	$\{x\} = |x - \lfloor x+\frac{1}{2} \rfloor |$ (in other words, the absolute
	difference between $x$ and the nearest integer), and therefore it suffices
	to prove that
	$\{\frac{(I_{q} - I_{p}) \log q - 2\pi n \frac{\log q}{\log p}}{2\pi}
	\} |s_{\star}|^{2}$ is bounded away from zero. Adjusting by a constant
	depending only on $p$ and $q$, it's equivalent to show that
	$n^{2} \left \{ n \frac{\log q}{\log p} \right \}$ is bounded away from
	zero. It is well-known that
	$\left \{ n \frac{\log q}{\log p} \right \}$ is equidistributed in the
	interval $(0, 1)$ because $\frac{\log q}{\log p}$ is irrational for any
	distinct $p$ and $q$. We claim that if $a_{n}$ is an equidistributed sequence
	in $(0,1)$, then $n^{2} a_{n}$ is bounded away from zero, from which the
	claim follows. Suppose not, so there exists a subsequence
	$n_{k}^{2} a_{n_{k}}$ converging to $0$; then for any $\epsilon > 0$, there
	is a $K$ so that for all $k \geq K$,
	$n_{k}^{2} a_{n_{k}} < \epsilon $ or equivalently
	$a_{n_{k}} < \frac{\epsilon}{n_{k}^{2}}$. Taking $f(x) = x^{-1/2}$ which
	is a Riemann-integrable function on $(0,1)$, this shows that
	$f(a_{n_{k}}) > \frac{n_{k}}{\sqrt{\epsilon}}$, and hence
	\begin{equation*}
		\frac{1}{n_{k}} \sum _{n=1}^{n_{k}} f(a_{n}) \geq \frac{1}{n_{k}} f(a_{n_{k}})
		> \frac{1}{n_{k}} \frac{n_{k}}{\sqrt{\epsilon}} =
		\frac{1}{\sqrt{\epsilon}}.
	\end{equation*}
	But this contradicts the Riemann integral characterization of equidistribution,
	which says that $a_{n}$ is equidstributed if for every Riemann-integrable
	$f$ on $(0,1)$,
	$\lim _{N \to \infty} \frac{1}{N} \* \sum _{n=1}^{N} f(a_{n}) = \int _{0}^{1}
	f(x)\ dx$. This completes the proof.
\end{proof}

In order to analyze the approximation, we will utilize integrals over contours
which sufficiently avoid the poles of $\Lambda _{N}^{Euler}$.
%
\begin{definition}
	\label{defn1}
	We say a closed contour $\mathcal{C}$ in $\mathbb{C}$ is sparse if for
	all $s \in \mathcal{C}$,
	\begin{equation*}
		\min _{s_{\star }\in D_{N}} |s - s_{\star}| \geq a \quad \text{and}
		\quad \min _{s_{\star }\in \mathbb{Z}_{\leq 0}} |s - s_{\star}| \geq b
	\end{equation*}
	for some positive constants $a$ and $b$ depending only on $N$.
\end{definition}
%
\begin{lemma}
	\label{lem4}
	There exist arbitrarily large rectangular contours $\mathcal{C}$ which
	are sparse.
\end{lemma}
\begin{proof}
	Since the poles of $g(s)$ are located at the nonpositive integers, it's
	straightforward to draw the left- and right-hand sides of the rectangle
	at uniform distance $a$ from these poles. For the top and bottom sides,
	since the poles of each $L_{p}$ occur at regular spacing on the vertical
	line $\operatorname{Re}(s) = \frac{k-1}{2}$, the poles cannot occur too
	often on this line, and the constant $b$ can be found by counting how many
	poles may occur in a segment of this line and considering the average gap
	between them.
\end{proof}

With these tools in hand, we will now show that $\Lambda _{N}^{pp}$ is
convergent for all $s \notin S_{N}$ and decays on the order of
$\frac{1}{1 + |s|}$ for $s$ belonging to a sparse contour.

\begin{lemma}
	\label{lem:pp-Lpn-convergent-sparse-contour-bound}
	The function
	$\Lambda _{N, n}^{pp}(s) = \sum _{s_{\star }
		\text{ pole of $L_{p_{n}}(s)$}} \sum _{k = -k_{s_{\star}}}^{-1} \rho _{s_{
			\star}}^{(k)} (s-s_{\star})^{k}$ is convergent for $s$ not a pole of
	$L_{p_{n}}(s)$. Furthermore, there exists a constant $K_{n}$ so that
	$|\Lambda _{N,n}^{pp}(s)| \leq \frac{K_{n}}{1+|s|}$ for all $s$ belonging
	to a sparse contour.
\end{lemma}
\begin{proof}
	Taking the Laurent expansion of $\Lambda _{N}^{Euler}(s)$ about a pole
	$s = s_{\star}$, each Laurent coefficient is bounded by a polynomial function
	of $|s_{\star}|$ thanks to Lemma \ref{lem:laurent-coefficients-bounded-growth}, times the value of
	$\Gamma $ or one of its derivatives at $s_{\star}$. Since the poles of
	$L_{p_{n}}$ are discrete and $\Gamma $ has the Schwartz decay property
	on any vertical line in the complex plane, the sum defining
	$\Lambda _{N, n}^{pp}(s)$ converges, proving the first claim. A technical
	computation involving Laurent expansions of each Euler factor shows that
	$(1 + |s|) \Lambda _{N, n}^{pp}(s)$ is bounded by a sum of
	\begin{equation*}
		S_{k, \ell}(s) = \sum _{s_{\star }\text{ pole of $L_{p_{n}}(s)$}} |
		\rho _{g, s_{\star}}^{(\ell )}| (1 + |s_{\star}|)^{p_{\ell}} |s_{
			\star}|^{k} \frac{1 + |s|}{|s_{\star}|^{k} |s - s_{\star}|^{-k}},
	\end{equation*}
	where $\rho _{g, s_{\star}}^{(\ell )}$ are Laurent coefficients of
	$g(s)$, for finitely many values of $k$ and $\ell $. So for the second
	claim, it suffices to show that $(1 + |s|) S_{k, \ell}(s)$ is bounded by
	a constant independent of $s$. The final factor in the summand is dominated
	by a constant independent of $s$ and $s_{\star}$ whenever $s$ belongs to
	a sparse contour, and the contribution of $\Gamma $ and its derivatives
	to the $\rho _{g, s_{\star}}^{(\ell )}$ ensures that the sum
	$\sum _{s_{\star }\text{ pole of $L_{p_{n}}(s)$}} |\rho _{g, s_{\star}}^{(
		\ell )}| (1 + |s_{\star}|)^{p_{\ell}} | |s_{\star}|^{k}$ converges, so
	the sparse contour bound follows.
\end{proof}

\begin{remark}
	\label{rem2}
	The proof of Lemma \ref{lem:laurent-coefficients-bounded-growth}, which
	is crucial for constructing the approximation, relies on an equidistribution
	property of the poles of the L-factors, which suggests that this property
	is a key ingredient in why the approximation technique works. As a result,
	we anticipate that the technique could be adapted to more general scenarios
	where the poles of the Euler factors satisfy an analogous equidistribution
	property.
\end{remark}

\begin{lemma}
	\label{lem:pp-g-convergent-sparse-contour-bound}
	The function
	$\Lambda _{N,g}^{pp}(s) = \sum _{s_{\star }\text{ pole of g(s)}} \sum _{k
		= -k_{s_{\star}}}^{-1} \rho _{s_{\star}}^{(k)} (s - s_{\star})^{k}$ is
	convergent for $s$ not a pole of $g(s)$. Furthermore, there exists a constant
	$K_{g}$ so that $|\Lambda _{N,g}^{pp}(s)| \leq \frac{K_{g}}{1+|s|}$ for
	all $s$ belonging to a sparse contour.
\end{lemma}
\begin{proof}
	$\Lambda _{N}^{Euler}$ has simple poles at the nonpositive integers, and
	the residue at $s = -n$ is bounded by $\frac{M c^{n}}{n!}$ for some constant
	$c$. So
	\begin{equation*}
		|\Lambda _{N,g}^{pp}(s)| \leq M \sum _{n = 0}^{\infty }
		\frac{c^{n}}{n!} |s+n|^{-1}.
	\end{equation*}
	Therefore, the $n!$ term in the denominator dominates, ensuring convergence
	of the sum. For the second claim, we write
	\begin{equation*}
		(1 + |s|) |\Lambda _{N,g}^{pp}(s)| \leq M \sum _{n = 0}^{\infty }
		\frac{c^{n}}{(n-1)!} \frac{1+|s|}{n |s+n|}.
	\end{equation*}
	The last factor is bounded by a constant independent of $n$ and $s$ whenever
	$s$ belongs to a sparse contour, and the sum
	$\sum _{n=0}^{\infty }\frac{c^{n}}{(n-1)!}$ converges, proving the second
	claim.
\end{proof}

\begin{proposition}
	\label{prop:lambda-pp-convergent-sparse-contour-bound}
	The function $\Lambda _{N}^{pp}(s)$ is convergent for
	$s \notin S_{N}$. For $s \in \mathcal{C}$ a sparse contour,
	$|\Lambda _{N}^{pp}(s)| \leq \frac{K}{1 + |s|}$ where $K$ is a constant
	depending only on the $L$-function and $N$.
\end{proposition}
\begin{proof}
	Due to Lemma \ref{lem:coinciding-poles}, only finitely many poles of the
	$L_{p}(s)$ factors can coincide. Therefore
	\begin{align*}
		\Lambda _{N}^{pp}(s) &= \sum _{n=1}^{N} \Lambda _{N,n}^{pp}(s) +
		\Lambda _{N,g}^{pp}(s)
		\\
		&\quad - (
		\text{finitely many terms due to coinciding poles of different
			$L_{p}(s)$ factors}),
	\end{align*}
	and each of these pieces is convergent thanks to  Lemmas \ref{lem:pp-Lpn-convergent-sparse-contour-bound} and \ref{lem:pp-g-convergent-sparse-contour-bound}, proving the first assertion.
	For the second statement, Lemmas \ref{lem:pp-Lpn-convergent-sparse-contour-bound} and \ref{lem:pp-g-convergent-sparse-contour-bound} provide bounds for
	$\Lambda _{N,n}^{pp}(s)$ and $\Lambda _{N,g}^{pp}(s)$ of the necessary
	form. Each of the terms coming from coinciding poles has the form
	$\rho (s - s_{\star})^{k}$ for some $k \leq -1$, so recalling that
	$s$ belongs to a sparse contour, one can produce individual bounds of the
	required form for each of these terms, which completes the proof.
\end{proof}

\section{Integral formulas for $\Lambda $ and $\Lambda _{N}$}
\label{sec:integral-formulas}

In this section, we deduce integral representations of the completed L-function
$\Lambda $ and its approximation $\Lambda _{N}$ which will be used later
to analyze the convergence of the approximation. The main ingredients in
the integral formula for $\Lambda $ are the functional equation and decay
properties of the completed L-function.
%
\begin{lemma}
	\label{lem:integral-formula-lambda}
	Let $s_{0} \in \mathbb{C}$ be arbitrary. Let
	$\sigma > \max (\operatorname{Re}(s_{0}), \operatorname{Re}(k-s_{0}))$.
	Then
	\begin{equation*}
		\Lambda (s_{0}) = \frac{1}{2\pi i} \int _{\operatorname{Re}(s) =
			\sigma} \Lambda (s) \left ( \frac{1}{s-s_{0}} + (-1)^{P}
		\frac{1}{s-k+s_{0}} \right )\ ds
	\end{equation*}
	where $(-1)^{P}$ is the sign in the functional equation of
	$\Lambda $.
\end{lemma}
\begin{proof}
	Let $\mathcal{C}$ be a rectangular contour defined by
	$\operatorname{Re}(s) = \sigma $,
	$\operatorname{Re}(s) = k - \sigma $, $\operatorname{Im}(s) = \tau $, and
	$\operatorname{Im}(s) = -\tau $, with $\tau $ large enough that
	$\mathcal{C}$ encloses $s_{0}$ and $k-s_{0}$. This contour is invariant
	under the mapping $s \mapsto k-s$. By the functional equation of
	$\Lambda $,
	\begin{equation*}
		\Lambda (s_{0}) = \frac{\Lambda (s_{0})}{2} + (-1)^{P}
		\frac{\Lambda (k-s_{0})}{2} = \frac{1}{2\pi i} \int _{\mathcal{C}}
		\frac{\Lambda (s)}{2} \left ( \frac{1}{s-s_{0}} + (-1)^{P}
		\frac{1}{s-k+s_{0}} \right )\ ds.
	\end{equation*}
	Because $\Lambda $ satisfies the functional equation
	$\Lambda (s) = (-1)^{P} \Lambda (k-s)$, this integral is invariant under
	the change of variables $s \mapsto k-s$. Setting $\mathcal{C}'$ to be the
	top and right sides of the rectangle $\mathcal{C}$, we get
	\begin{equation*}
		\Lambda (s_{0}) = \frac{1}{2\pi i} \int _{\mathcal{C}'} \Lambda (s)
		\left ( \frac{1}{s-s_{0}} + (-1)^{P} \frac{1}{s-k+s_{0}} \right )\ ds.
	\end{equation*}
	Taking the limit as $\tau \to +\infty $, the integral over the top side
	of the triangle tends to zero (one can see this by recognizing
	$\Gamma (s) L(s)$ as the Mellin transform of
	$\sum _{n=1}^{\infty} a_{n} \exp (-nz)$). Thus, only the contribution of
	the right-hand segment in $\mathcal{C}'$ appears in the limit, yielding
	the claim:
	\begin{equation*}
		\Lambda (s_{0}) = \frac{1}{2\pi i} \int _{\operatorname{Re}(s) =
			\sigma} \Lambda (s) \left ( \frac{1}{s-s_{0}} + (-1)^{P}
		\frac{1}{s-k+s_{0}} \right )\ ds.\qedhere
	\end{equation*}
\end{proof}
%
\begin{corollary}
	\label{cor:integral-formula-lambda-coefficients}
	Let $s_{0} \in \mathbb{C}$ be arbitrary. Let
	$\sigma > \max (\operatorname{Re}(s_{0}), \operatorname{Re}(k-s_{0}),
	\frac{k+1}{2})$. Then
	\begin{equation*}
		\Lambda (s_{0}) = \frac{1}{2\pi i} \int _{\operatorname{Re}(s) =
			\sigma} g(s) \sum _{n=1}^{\infty }\frac{a_{n}}{n^{s}} \left (
		\frac{1}{s-s_{0}} + (-1)^{P} \frac{1}{s-k+s_{0}} \right )\ ds.
	\end{equation*}
\end{corollary}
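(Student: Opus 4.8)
The plan is to deduce this corollary directly from Lemma \ref{lem:integral-formula-lambda} by substituting the Dirichlet series for $\Lambda$ into the integrand. First I would invoke Lemma \ref{lem:integral-formula-lambda}, which applies since the hypothesis $\sigma > \max(\Re(s_0), \Re(k-s_0), \frac{k+1}{2})$ in particular guarantees $\sigma > \max(\Re(s_0), \Re(k-s_0))$, giving
\[\Lambda(s_0) = \frac{1}{2\pi i} \int_{\Re(s) = \sigma} \Lambda(s) \left( \frac{1}{s-s_0} + (-1)^P \frac{1}{s-k+s_0} \right)\ ds.\]
Next I would use the remaining part of the hypothesis, $\sigma > \frac{k+1}{2}$: by the Corollary following the Petersson--Ramanujan conjecture, the Dirichlet series $L(s) = \sum_{n=1}^\infty a_n n^{-s}$ converges (indeed absolutely) on the vertical line $\Re(s) = \sigma$, so that $\Lambda(s) = g(s) L(s) = g(s) \sum_{n=1}^\infty \frac{a_n}{n^s}$ as an identity of holomorphic functions on that line. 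Substituting this expression for $\Lambda(s)$ into the integrand above yields exactly the claimed formula.

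There is essentially no obstacle here; the only point worth a sentence is that the substitution is legitimate precisely because the extra condition $\sigma > \frac{k+1}{2}$ places the contour inside the region of convergence of the Dirichlet series, whereas Lemma \ref{lem:integral-formula-lambda} alone only controlled $\sigma$ relative to $s_0$ and $k - s_0$. If desired, one could additionally note that the absolute convergence of $\sum a_n n^{-s}$ on $\Re(s) = \sigma$, together with the rapid decay of $g(s)$ on vertical lines, justifies interchanging the sum and the integral, writing the right-hand side as $\sum_{n=1}^\infty \frac{1}{2\pi i}\int_{\Re(s)=\sigma} \frac{a_n g(s)}{n^s}\left( \frac{1}{s-s_0} + (-1)^P \frac{1}{s-k+s_0} \right) ds$, but this is not needed for the statement as written and can be deferred to where such a term-by-term formula is actually used.
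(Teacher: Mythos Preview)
Your proposal is correct and matches the paper's own proof essentially verbatim: the paper simply invokes Lemma~\ref{lem:integral-formula-lambda} and then substitutes $\Lambda(s) = g(s)\sum_{n=1}^\infty a_n n^{-s}$, valid on $\Re(s)=\sigma$ because $\sigma > \tfrac{k+1}{2}$. Your additional remarks about absolute convergence and the optional sum--integral interchange are accurate but, as you note, not needed for the corollary as stated.
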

\begin{proof}
	This follows directly from Lemma \ref{lem:integral-formula-lambda}: by
	definition
	$\Lambda (s) = g(s) \sum _{n=1}^{\infty }\frac{a_{n}}{n^{s}}$ for
	$\operatorname{Re}(s) > \frac{k+1}{2}$.
\end{proof}

Next we deduce a similar formula for $\Lambda _{N}$. Similarly to before,
the functional equation and decay properties play a key role along with
the bound of $\Lambda _{N}^{pp}$ on sparse contours.
%
\begin{lemma}
	\label{lem:integral-formula-approximation}
	Let $s_{0} \in \mathbb{C}$ be arbitrary. Let
	$\sigma > \max (\operatorname{Re}(s_{0}), \operatorname{Re}(k-s_{0}),
	\frac{k+1}{2})$. Then
	\begin{equation*}
		\Lambda _{N}(s_{0}) = \frac{1}{2\pi i} \int _{\operatorname{Re}(s) =
			\sigma} \Lambda _{N}^{Euler}(s) \left ( \frac{1}{s-s_{0}} + (-1)^{P}
		\frac{1}{s-k+s_{0}} \right )\ ds
	\end{equation*}
	where $(-1)^{P}$ is the sign in the functional equation of
	$\Lambda $.
\end{lemma}
\begin{proof}
	Let $\mathcal{C}$ be a rectangular sparse contour. Tracing the construction
	of the approximation, by Cauchy's integral formula we have
	\begin{align*}
		\Lambda _{N}(s_{0}) &= \Lambda _{N}^{ingoing}(s_{0}) + (-1)^{P}
		\Lambda _{N}^{ingoing}(k-s_{0})
		\\
		&= \frac{1}{2\pi i} \int _{\mathcal{C}} \Lambda _{N}^{ingoing}(s)
		\left ( \frac{1}{s-s_{0}} + (-1)^{P} \frac{1}{s-k+s_{0}} \right )\ ds
		\\
		&= \frac{1}{2\pi i} \int _{\mathcal{C}} \Lambda _{N}^{Euler}(s)
		\left ( \frac{1}{s-s_{0}} + (-1)^{P} \frac{1}{s-k+s_{0}} \right )\ ds
		\\
		&\quad - \frac{1}{2\pi i} \int _{\mathcal{C}} \Lambda _{N}^{pp}(s)
		\left ( \frac{1}{s-s_{0}} + (-1)^{P} \frac{1}{s-k+s_{0}} \right )\ ds.
	\end{align*}
	For $|s|$ large, Proposition \ref{prop:lambda-pp-convergent-sparse-contour-bound} implies
	$\left | \Lambda _{N}^{pp}(s) \frac{1}{s-s_{0}} \right | \leq
	\frac{2K}{|s|^{2}}$ and
	$\left | \Lambda _{N}^{pp}(s) \frac{1}{s-k+s_{0}} \right | \leq
	\frac{2K}{|s|^{2}}$. So as the sparse contours $\mathcal{C}$ get larger,
	the integral term involving $\Lambda _{N}^{pp}(s)$ is bounded in magnitude
	by
	$\frac{4K}{2\pi}
	\frac{\operatorname{length}(\mathcal{C})}{\min _{s \in \mathcal{C}} |s|^{2}}$.
	This quantity tends to zero in the limit as the interiors of the contours
	$\mathcal{C}$ cover the complex plane provided that
	$
	\frac{\operatorname{length}(\mathcal{C})}{\min _{s \in \mathcal{C}} |s|^{2}}$
	tends to zero, which can be guaranteed, and therefore the integral involving
	$\Lambda _{N}^{pp}$ has zero contribution in the limit. For the integral
	involving $\Lambda _{N}^{Euler}$, define two new closed contours
	$\mathcal{C}_{L}$ and $\mathcal{C}_{R}$ by the parts of
	$\mathcal{C}$ to the left and right of the line
	$\operatorname{Re}(s) = \sigma $ respectively, together with the segment
	of $\operatorname{Re}(s) = \sigma $ which completes each to a closed contour.
	Since $\mathcal{C}_{R}$ contains no poles of
	$\Lambda _{N}^{Euler}(s)$ and does not enclose $s_{0}$ or $k-s_{0}$,
	\begin{equation*}
		\frac{1}{2\pi i} \int _{\mathcal{C}_{R}} \Lambda _{N}^{Euler}(s)
		\left ( \frac{1}{s-s_{0}} + (-1)^{P} \frac{1}{s-k+s_{0}} \right )\ ds =
		0
	\end{equation*}
	implying that
	\[\frac{1}{2\pi i} \int _{\mathcal{C}} \Lambda _{N}^{Euler}(s) \left (
	\frac{1}{s-s_{0}} + (-1)^{P} \frac{1}{s-k+s_{0}} \right )\ ds = \frac{1}{2\pi i} \int _{\mathcal{C}_{L}} \Lambda _{N}^{Euler}(s)
	\left ( \frac{1}{s-s_{0}} + (-1)^{P} \frac{1}{s-k+s_{0}} \right )\ ds.\]
	
	We will show that only one side of the rectangle $\mathcal{C}_{L}$ matters
	in the limit as we take larger sparse contours. Consider the magnitude
	of
	\begin{equation*}
		|\Lambda _{N}^{Euler}(s)| = |g(s)| \prod _{p=2}^{p_{N}} |L_{p}(s)| =
		\left ( \frac{\sqrt{C}}{2\pi} \right )^{\operatorname{Re}(s)} |
		\Gamma (s)| \prod _{p=2}^{p_{N}} |L_{p}(s)|,
	\end{equation*}
	on the left-hand segment of $\mathcal{C}_{L}$ as the contours
	$\mathcal{C}$ get larger; in other words,
	$\lim _{n \to \infty} |\Lambda _{N}^{Euler}(-n-\frac{1}{2} + i\tau )|$.
	As $n \to \infty $, each $|L_{p}(-n-\frac{1}{2} + i\tau )| \to 0$, and
	$|\Gamma (-n-\frac{1}{2} + i\tau )| \leq |\Gamma (-n-\frac{1}{2})| =
	\frac{|\Gamma (\frac{1}{2})|}{|(-n-\frac{1}{2}) \cdots (-\frac{1}{2})|}
	\leq \frac{2 |\Gamma (\frac{1}{2})|}{n!}$ overpowers the factor
	$\left ( \frac{\sqrt{C}}{2\pi} \right )^{\operatorname{Re}(s)} =
	\left ( \frac{2\pi}{\sqrt{C}} \right )^{n+\frac{1}{2}}$. So overall, as
	$\mathcal{C}$ gets larger, the contribution to the integral of the left-hand
	segment of $\mathcal{C}_{L}$ tends to zero.
	
	Next, consider the top horizontal segment of $\mathcal{C}_{L}$. For any
	fixed $\operatorname{Re}(s)$, each $|L_{p}(s)|$ can be bounded above independently
	of $\operatorname{Im}(s)$. Since $|\Gamma (s)|$ has the Schwartz decay
	property on any vertical line, the contribution to the integral of the
	top horizontal segment of $\mathcal{C}_{L}$ tends to zero as
	$\mathcal{C}$ gets larger; the same argument yields the same result for
	the bottom segment.
	
	Overall, taking the limit over a sequence of increasing sparse contours
	$\mathcal{C}$ whose interiors cover the complex plane, the only remaining
	contribution comes from the line $\operatorname{Re}(s) = \sigma $ in
	$\mathcal{C}_{L}$ and we obtain the desired formula
	\begin{equation*}
		\Lambda _{N}(s_{0}) = \frac{1}{2\pi i} \int _{\operatorname{Re}(s) =
			\sigma} \Lambda _{N}^{Euler}(s) \left ( \frac{1}{s-s_{0}} + (-1)^{P}
		\frac{1}{s-k+s_{0}} \right )\ ds.\qedhere
	\end{equation*}
\end{proof}

\begin{remark}
	\label{rmk:approximation-and-ap-asymptotics}
	The bounded growth properties of the $a_{p}$ were crucial in the above
	proof. More precisely, the argument relies on the fact that the line
	$\operatorname{Re}(s) = \sigma $ can be made to lie to the right of all
	the poles of $\Lambda _{N}^{Euler}$, so that the union of the interiors
	of the contours $\mathcal{C}_{L}$ enclose all these poles.
\end{remark}

Combining the lemmas, we obtain an error formula for the approximation.
%
\begin{theorem}[Error formula, integral form]
	\label{thm:error-formula}
	Let $s_{0} \in \mathbb{C}$ be arbitrary. Let
	$\sigma > \max (\operatorname{Re}(s_{0}), \operatorname{Re}(k-s_{0}),
	\frac{k+1}{2})$. Then
	\begin{equation*}
		\Lambda (s_{0}) - \Lambda _{N}(s_{0}) = \int _{\operatorname{Re}(s) =
			\sigma} (\Lambda (s) - \Lambda _{N}^{Euler}(s)) \left (
		\frac{1}{s-s_{0}} + (-1)^{P} \frac{1}{s-k+s_{0}} \right )\ ds.
	\end{equation*}
\end{theorem}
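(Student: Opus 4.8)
The plan is to obtain the error formula by subtracting the integral representation of $\Lambda_N(s_0)$ from that of $\Lambda(s_0)$, both of which are already available. Corollary \ref{cor:integral-formula-lambda-coefficients} gives, for $\sigma > \max(\Re(s_0), \Re(k-s_0), \frac{k+1}{2})$,
\[\Lambda(s_0) = \frac{1}{2\pi i}\int_{\Re(s)=\sigma} \Lambda(s)\left(\frac{1}{s-s_0} + (-1)^P\frac{1}{s-k+s_0}\right)\ ds,\]
and Lemma \ref{lem:integral-formula-approximation} gives, for the same range of $\sigma$,
\[\Lambda_N(s_0) = \frac{1}{2\pi i}\int_{\Re(s)=\sigma} \Lambda_N^{Euler}(s)\left(\frac{1}{s-s_0} + (-1)^P\frac{1}{s-k+s_0}\right)\ ds.\]
Note that the hypothesis $\sigma > \frac{k+1}{2}$ is precisely what both results require: it places the vertical line of integration to the right of the abscissa of absolute convergence of the Dirichlet series for $L$ and to the right of all the poles of the finitely many factors $L_p$, $p \le p_N$, so that both $\Lambda(s)$ and $\Lambda_N^{Euler}(s)$ are holomorphic on and near this line.

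The one step that needs to be written out is the justification for merging the two integrals into one. First I would record that each integral converges absolutely on $\Re(s) = \sigma$. For $\Lambda(s) = g(s)\sum_{n=1}^\infty a_n n^{-s}$, the series $\sum_n a_n n^{-\sigma}$ converges absolutely by the Petersson–Ramanujan bound together with $\sigma > \frac{k+1}{2}$ and is bounded in $\Im(s)$, while $g(s) = C^{s/2}(2\pi)^{-s}\Gamma(s)$ has the Schwartz decay property on the vertical line, and the kernel $\frac{1}{s-s_0} + (-1)^P\frac{1}{s-k+s_0}$ is $O(|s|^{-1})$; hence the integrand is absolutely integrable. The identical reasoning, with the factor $\prod_{p=2}^{p_N} L_p(s)$ (bounded on $\Re(s) = \sigma$, away from its poles) replacing the Dirichlet series, handles $\Lambda_N^{Euler}$. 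With absolute convergence of both integrals in hand, linearity of the integral lets me combine them to get
\[\Lambda(s_0) - \Lambda_N(s_0) = \frac{1}{2\pi i}\int_{\Re(s)=\sigma}\bigl(\Lambda(s) - \Lambda_N^{Euler}(s)\bigr)\left(\frac{1}{s-s_0} + (-1)^P\frac{1}{s-k+s_0}\right)\ ds,\]
which is the asserted formula.

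I do not expect a genuine obstacle here: the substance of the theorem is entirely carried by the two preceding integral formulas, and what remains is bookkeeping. The only point worth stating with care is that the difference $\Lambda(s) - \Lambda_N^{Euler}(s)$ need not be individually smaller than its summands — in fact on the line $\Re(s) = \sigma$ \emph{both} pieces decay rapidly — so there is no question of a merely conditionally convergent integral, and the termwise manipulation combining the two representations is fully justified.
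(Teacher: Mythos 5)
Your proof is correct and follows the same route as the paper's, which simply states that the theorem follows from Lemma \ref{lem:integral-formula-lambda} (or equivalently Corollary \ref{cor:integral-formula-lambda-coefficients}) and Lemma \ref{lem:integral-formula-approximation} by subtraction; you merely spell out the absolute-convergence justification for combining the two integrals, which the paper leaves implicit. One small point of note: your final display retains the factor $\frac{1}{2\pi i}$ that appears in both input lemmas, whereas the paper's statement of Theorem \ref{thm:error-formula} omits it --- that factor should indeed be present, so your version is the more careful one.
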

\begin{proof}
	This follows directly from Lemmas \ref{lem:integral-formula-lambda} and \ref{lem:integral-formula-approximation} as the difference between the
	two formulas.
\end{proof}

\begin{remark}
	\label{rem4}
	Similar results hold the derivatives of $\Lambda $ by differentiating under
	the integral sign. Let $s_{0} \in \mathbb{C}$ be arbitrary. For
	$\sigma > \max (\operatorname{Re}(s_{0}), \operatorname{Re}(k-s_{0}))$,
	\begin{equation*}
		\Lambda ^{(n)}(s_{0}) = \frac{n!}{2\pi i} \int _{\operatorname{Re}(s) =
			\sigma} \Lambda (s) \left ( \frac{1}{(s-s_{0})^{n+1}} + (-1)^{n} (-1)^{P}
		\frac{1}{(s-k+s_{0})^{n+1}} \right )\ ds.
	\end{equation*}
	For
	$\sigma > \max (\operatorname{Re}(s_{0}), \operatorname{Re}(k-s_{0}),
	\frac{k+1}{2})$,
	\begin{equation*}
		\Lambda _{N}^{(n)}(s_{0}) = \frac{n!}{2\pi i} \int _{
			\operatorname{Re}(s) = \sigma} \Lambda _{N}^{Euler}(s) \left (
		\frac{1}{(s-s_{0})^{n+1}} + (-1)^{n} (-1)^{P}
		\frac{1}{(s-k+s_{0})^{n+1}} \right )\ ds
	\end{equation*}
	and
	\[\Lambda ^{(n)}(s_{0}) - \Lambda _{N}^{(n)}(s_{0}) = \frac{n!}{2\pi i} \int _{\operatorname{Re}(s) = \sigma} (
	\Lambda (s) - \Lambda _{N}^{Euler}(s)) \left (
	\frac{1}{(s-s_{0})^{n+1}} + (-1)^{n} (-1)^{P}
	\frac{1}{(s-k+s_{0})^{n+1}} \right )\ ds.\]
\end{remark}

\section{Convergence of truncated approximations}
\label{sec:convergence-of-approximations}

Here, we utilize the error formula of Theorem \ref{thm:error-formula} to
prove that the approximations converge to the full completed L-function.
An identical argument, which we omit, ensures convergence of the derivatives
of $\Lambda _{N}$ as well.

\begin{theorem}
	\label{thm:approximations-convergence}
	For any $s_{0} \in \mathbb{C}$,
	$\Lambda _{N}(s_{0}) \to \Lambda (s_{0})$ as $N \to \infty $.
\end{theorem}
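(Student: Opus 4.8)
The plan is to combine the integral error formula of Theorem~\ref{thm:error-formula} with a dominated convergence argument on the vertical line $\Re(s) = \sigma$. Fix $s_0 \in \C$ and choose, once and for all, some $\sigma > \max(\Re(s_0), \Re(k-s_0), \tfrac{k+1}{2})$; the essential point is that this condition does not involve $N$, so Theorem~\ref{thm:error-formula} gives
\[\Lambda(s_0) - \Lambda_N(s_0) = \int_{\Re(s) = \sigma} \bigl(\Lambda(s) - \Lambda_N^{Euler}(s)\bigr)\left( \frac{1}{s-s_0} + (-1)^P \frac{1}{s-k+s_0} \right)\ ds\]
for every $N$ with one and the same contour of integration.

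First I would make the integrand explicit. On the line $\Re(s) = \sigma$ the Dirichlet series $\sum_n a_n n^{-s}$ converges absolutely (since $\sigma > \tfrac{k+1}{2}$), and expanding each local factor as a Dirichlet series and multiplying out, using multiplicativity of $n \mapsto a_n$, gives $\prod_{p \le p_N} L_p(s) = \sum_{n} a_n n^{-s}$ with the sum running over $p_N$-smooth $n$ (those whose prime factors are all $\le p_N$). Hence $\Lambda(s) - \Lambda_N^{Euler}(s) = g(s) \sum_{n} a_n n^{-s}$ where now $n$ ranges over integers having at least one prime factor exceeding $p_N$; every such $n$ satisfies $n \ge p_{N+1}$. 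In particular, for each fixed $s$ on the line, $\Lambda_N^{Euler}(s) \to \Lambda(s)$ as $N \to \infty$, because the tail $\sum_{n \ge p_{N+1}} |a_n| n^{-\sigma}$ of a convergent series tends to $0$; in fact this convergence is uniform in $s$ along the line.

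Next I would produce an $N$-independent integrable majorant. Set $A_\sigma = \sum_{n \ge 1} |a_n| n^{-\sigma} < \infty$. Then on the line $|\Lambda(s) - \Lambda_N^{Euler}(s)| \le A_\sigma |g(s)|$ for all $N$, and $|g(\sigma + i\tau)| = C^{\sigma/2}(2\pi)^{-\sigma} |\Gamma(\sigma + i\tau)|$ decays faster than any power of $|\tau|$ by Stirling's formula, while $\frac{1}{s-s_0} + (-1)^P\frac{1}{s-k+s_0}$ is bounded on the line (as $\sigma \ne \Re(s_0), \Re(k-s_0)$) and is $O(|\tau|^{-1})$ for large $|\tau|$. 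Thus $A_\sigma |g(s)| \bigl( \frac{1}{|s-s_0|} + \frac{1}{|s-k+s_0|} \bigr)$ is integrable on $\Re(s) = \sigma$ and dominates the integrands uniformly in $N$. Since the integrands tend to $0$ pointwise, the dominated convergence theorem gives $\Lambda(s_0) - \Lambda_N(s_0) \to 0$, as desired. (Equivalently, one can simply bound $|\Lambda(s_0) - \Lambda_N(s_0)| \le B \sum_{n \ge p_{N+1}} |a_n| n^{-\sigma}$, where $B = \int_{\Re(s)=\sigma} |g(s)|\bigl(\frac{1}{|s-s_0|}+\frac{1}{|s-k+s_0|}\bigr)\,|ds| < \infty$, and let $N \to \infty$.)

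This argument meets no serious obstacle once Theorem~\ref{thm:error-formula} is in hand; the only steps needing care are the absolute convergence justifying the identification of the truncated Euler product with the $p_N$-smooth part of the Dirichlet series (which is exactly why one takes $\sigma > \tfrac{k+1}{2}$, via the Ramanujan--Petersson bound on the $a_n$), and the verification that $g$ decays fast enough on the vertical line for the majorant to be integrable — the same Schwartz-type decay of $\Gamma$ already used throughout Section~\ref{sec:integral-formulas}. An identical estimate with the higher-order kernels $\frac{1}{(s-s_0)^{n+1}} + (-1)^n(-1)^P\frac{1}{(s-k+s_0)^{n+1}}$ yields the analogous convergence of the derivatives $\Lambda_N^{(n)}(s_0) \to \Lambda^{(n)}(s_0)$.
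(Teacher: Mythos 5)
Your proof is correct and follows the same strategy as the paper: apply the integral error formula of Theorem~\ref{thm:error-formula} on a fixed line $\Re(s)=\sigma$, recognize $\Lambda(s)-\Lambda_N^{Euler}(s)$ as $g(s)$ times the Dirichlet series supported on integers with a prime factor exceeding $p_N$, majorize by $|g(s)|\,A_\sigma\,(|s-s_0|^{-1}+|s-k+s_0|^{-1})$ uniformly in $N$, and apply dominated convergence. Your parenthetical observation that one can bypass DCT with the direct bound $|\Lambda(s_0)-\Lambda_N(s_0)|\le B\sum_{n\ge p_{N+1}}|a_n|n^{-\sigma}$ is a nice simplification but not a different method.
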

\begin{proof}
	Let
	$\sigma > \max (\operatorname{Re}(s_{0}), \operatorname{Re}(k - s_{0}),
	\frac{k+1}{2})$. In particular this means the expansion
	$\Lambda (s) = g(s) \sum _{n=1}^{\infty }a_{n} n^{-s}$ holds on the line
	$\operatorname{Re}(s) = \sigma $. By Theorem \ref{thm:error-formula},
	\begin{equation*}
		\Lambda (s_{0}) - \Lambda _{N}(s_{0}) = \int _{\operatorname{Re}(s) =
			\sigma} (\Lambda (s) - \Lambda _{N}^{Euler}(s)) \left (
		\frac{1}{s-s_{0}} + (-1)^{P} \frac{1}{s-k+s_{0}} \right )\ ds
	\end{equation*}
	and we can expand $\Lambda (s) - \Lambda _{N}^{Euler}(s)$ as
	$g(s) \sum _{n=1}^{\infty }c_{n}^{(N)} n^{-s}$ where
	\begin{equation*}
		c_{n}^{(N)} =
		\begin{cases}
			a_{n} & \text{$n$ has a prime factor $> p_{N}$}
			\\
			0 & \text{$n$ is a product of primes $\leq p_{N}$}.
		\end{cases}
	\end{equation*}
	Thus
	\begin{align*}
		&\left | (\Lambda (s) - \Lambda _{N}^{Euler}(s)) \left (
		\frac{1}{s-s_{0}} + (-1)^{P} \frac{1}{s-k+s_{0}} \right ) \right |
		\\
		&\quad \leq |g(s)| \left ( \sum _{n=1}^{\infty }|c_{n}^{(N)}| n^{-
			\sigma} \right ) \left | \frac{1}{s-s_{0}} + (-1)^{P}
		\frac{1}{s-k+s_{0}} \right |
		\\
		&\quad \leq |g(s)| \left ( \sum _{n=1}^{\infty }|a_{n}| n^{-\sigma}
		\right ) \left | \frac{1}{s-s_{0}} + (-1)^{P} \frac{1}{s-k+s_{0}}
		\right |.
	\end{align*}
	The series $\sum _{n=1}^{\infty }|a_{n}| n^{-\sigma}$ is convergent since
	$\sigma > \frac{k+1}{2}$, and
	$\int _{\operatorname{Re}(s) = \sigma} |g(s)| |
	\frac{1}{s-s_{0}} + (-1)^{P} \frac{1}{s-k+s_{0}} |\ ds$ converges
	thanks to the rapid decay properties of $|g(s)|$, so the integrand is dominated
	by an integrable function independent of $N$. Therefore the hypotheses
	of the dominated convergence theorem are satisfied. It follows that
	\begin{equation*}
		\lim _{N \to \infty} \Lambda (s_{0}) - \Lambda _{N}(s_{0}) = \!\int _{
			\operatorname{Re}(s) = \sigma}\! g(s) \lim _{N \to \infty} \left [
		\sum _{n=1}^{\infty }c_{n}^{(N)} n^{-s} \right ] \left (
		\frac{1}{s-s_{0}} + (-1)^{P} \frac{1}{s-k+s_{0}} \right )\ ds.
	\end{equation*}
	Since the integrand tends to zero pointwise, we obtain the desired conclusion.
\end{proof}

\section{Example: approximating the L-function of the modular discriminant}
\label{sec:example-modular-discriminant}

As an example, we show here a demonstration of the approximation technique
for the L-function of the modular discriminant $\Delta $, whose coefficients
are the well-known Ramanujan tau sequence. This is a modular form of weight
$k = 12$ and level $C = 1$ and its L-function satisfies the functional
equation $\Lambda (s) = \Lambda (12-s)$. Since the generalized Riemann
hypothesis predicts that the zeros of L-functions with positive real part
all lie on the critical line $\operatorname{Re}(s) = 6$, where the Euler
product expansion does not hold, one might use the approximation technique
to investigate the locations of zeros of $\Lambda (s)$ on this line. Fig. \ref{fig:Z-function-plot-modular-discriminant} compares the Z-function
$Z(t) = \frac{\Lambda (6 + it)}{|g(6 + it)|}$ of this L-function and the
approximations $Z_{N}(t) = \frac{\Lambda _{N}(6 + it)}{|g(6 + it)|}$ for
$N = 1, 2, 3$, and Table \ref{tab:Z-function-zeros-modular-discriminant} compares the first eight
zeros of the true Z-function and the approximation $Z_{3}$ using the first
three Euler factors. The data for these computations was sourced from the
LMFDB
\cite[Newform orbit 1.12.a.a, L-function 2-1-1.1-c11-0-0]{lmfdb}.

\begin{figure}
	\includegraphics[width=0.5\linewidth]{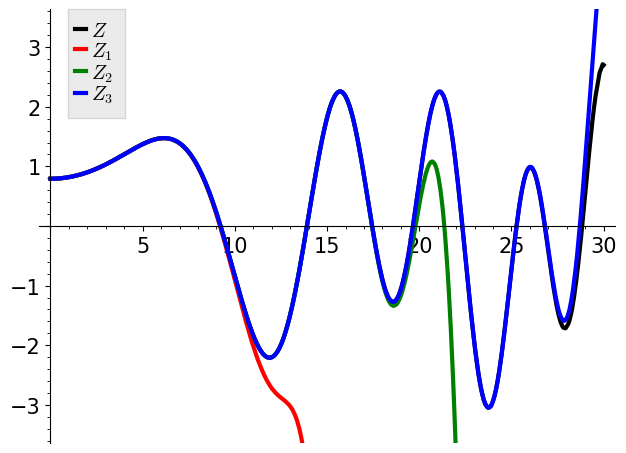}
	\caption{Plot of $Z(t)$ (black), $Z_{1}(t)$ (red), $Z_{2}(t)$ (green), and
		$Z_{3}(t)$ (blue) for $t \in [0, 30]$.}
	\label{fig:Z-function-plot-modular-discriminant}
\end{figure}
%
\begin{table}
	\caption{Comparison of the first eight zeros of $Z(t)$ and $Z_{3}(t)$.}
	\label{tab:Z-function-zeros-modular-discriminant}
	\begin{tabular}{ lll }\hline
		$t_{0} =$ zero of $Z(t)$ & $t_{0}' =$ zero of $Z_{3}(t)$ & Error ($t_{0} - t_{0}'$) \\
		\hline			
		9.2223793999211... & 9.2223793999323... & $-1.11... \cdot 10^{-11}$ \\
		
		13.907549861392... & 13.907549860287... & $1.10... \cdot 10^{-9}$ \\
		
		17.442776978234... & 17.442777058770... & $-8.05... \cdot 10^{-8}$ \\
		
		19.656513141954... & 19.656511952233... & $1.18... \cdot 10^{-6}$ \\
		
		22.336103637209... & 22.336129046421... & $-2.54... \cdot 10^{-5}$ \\
		
		25.274636548112... & 25.273041434242... & $1.59... \cdot 10^{-3}$ \\
		
		26.804391158350... & 26.818461412067... & $-1.40... \cdot 10^{-2}$ \\
		
		28.831682624186... & 28.705434564429... & $1.26... \cdot 10^{-1}$ \\
		\hline \end{tabular}
	%
\end{table}

In Fig. \ref{fig:Z-function-plot-modular-discriminant}, it is visually
apparent that the Z-functions using one, two, and three Euler factors form
successively better approximations to the true Z-function in successively
larger regions around the central point $t = 0$. The data of just three
Euler factors provides a tight approximation to the true Z-function up
to $t = 28$. Table \ref{tab:Z-function-zeros-modular-discriminant} shows
that the approximation using three Euler factors predicts the location
of the first zeros of $Z(t)$ to several decimal places.

\begin{remark}
	\label{rem5}
	For large $t$, $\Lambda (6 + it)$, $\Lambda _{N}(6 + it)$, and
	$g(6 + it)$ are all very small. In order to avoid catastrophic numerical
	error for large $t$, one must use high-precision computations. This can
	be quite memory-intensive. We refer the interested reader to the the supplementary material attached to this article on the Journal of Number Theory website. This supplementary material includes a Sage file used to generate
	Fig. \ref{fig:Z-function-plot-modular-discriminant} and Table \ref{tab:Z-function-zeros-modular-discriminant}, which shows a basic example
	of these high-precision numerical computations.
\end{remark}

\section{Interpretation via Mellin transforms}
\label{sec:mellin-transforms}

In this section, we give another perspective on the approximation procedure
by interpreting it in terms of Mellin transforms. Our starting point is
Corollary \ref{cor:integral-formula-lambda-coefficients}. Interchanging
the sum and the integral, we obtain the identity

\begin{equation*}%
	\small
	\begin{aligned}
		\Lambda (s_{0}) = \sum _{n=1}^{\infty }a_{n} \left (
		\underbrace{\frac{1}{2\pi i} \int _{\operatorname{Re}(s) = \sigma} \frac{\Gamma (s)}{s - s_{0}} \left ( \frac{2 \pi n}{\sqrt{C}} \right )^{-s}\ ds}_{I_{1}}
		+ (-1)^{P}
		\underbrace{\frac{1}{2\pi i} \int _{\operatorname{Re}(s) = \sigma} \frac{\Gamma (s)}{s - k + s_{0}} \left ( \frac{2 \pi n}{\sqrt{C}} \right )^{-s}\ ds}_{I_{2}}
		\right ).
	\end{aligned}
\end{equation*}
We will interpret $I_{1}$ and $I_{2}$ as inverse Mellin transforms. Recall
that the Mellin transform $\mathcal{M}$ and its inverse
$\mathcal{M}^{-1}$ are defined by
\begin{equation*}
	\mathcal{M}\{f\}(s) = \mathcal{M}\{f(x)\}(s) = \int _{0}^{\infty }x^{s-1}
	f(x)\ dx
\end{equation*}
and
\begin{equation*}
	\mathcal{M}^{-1}\{\varphi \}(s) = \mathcal{M}^{-1}\{\varphi (s)\}(x) =
	\frac{1}{2 \pi i} \int _{\operatorname{Re}(s) = \sigma} x^{-s}
	\varphi (s)\ ds
\end{equation*}
for $\sigma $ sufficiently large. There is a convolution-type formula for
Mellin transforms: if
$I(x) = \int _{0}^{\infty }f(t) g(x^{k} t)\ dt$, then
$\mathcal{M}\{I\}(s) = \int _{0}^{\infty }x^{s-1} \int _{0}^{\infty }f(t)
g(x^{k} t)\ dt\ dx$. Substituting $u = x^{k} t$, we get
\begin{equation*}
	\mathcal{M}\{I\}(s) = \frac{1}{k} \int _{0}^{\infty }t^{-s / k} f(t)
	\ dt \int _{0}^{\infty }u^{s / k - 1} g(u)\ du = \frac{1}{k}
	\mathcal{M}\{f\}\left ( 1 - \frac{s}{k} \right ) \mathcal{M}\{g\}
	\left ( \frac{s}{k} \right ).
\end{equation*}
Taking $k = 1$, we get
$M\{\int _{0}^{\infty }f(t) g(xt)\ dt\}(s) = \mathcal{M}\{f\}(1-s)
\mathcal{M}\{g\}(s)$ and so
\begin{equation*}
	\int _{0}^{\infty }f(t) g(xt)\ dt = \mathcal{M}^{-1} {\Big \{}
	\mathcal{M}\{f\}(1-s) \mathcal{M}\{g\}(s) {\Big \}} (x).
\end{equation*}
Observe that $I_{1}$ is exactly the inverse Mellin transform
$\mathcal{M}^{-1}\{\varphi _{1}\}\left ( \frac{2 \pi n}{\sqrt{C}}
\right )$ where
\begin{equation*}
	\varphi _{1}(s) = \frac{\Gamma (s)}{s - s_{0}} = - \left (
	\frac{1}{(1 - s) + (s_{0} - 1)} \right ) \Gamma (s).
\end{equation*}
Since
\begin{equation*}
	\mathcal{M}\{\exp (-x)\}(s) = \Gamma (s) \quad \text{and} \quad
	\mathcal{M}\left \{
	\begin{array}{l @{\quad}l}
		-x^{a} & 1 < x < \infty
		\\
		0 & 0 < x < 1
	\end{array}
	\right \}(s) = \frac{1}{s+a},
\end{equation*}
the latter being valid for
$\operatorname{Re}(s) < -\operatorname{Re}(a)$, we see that
$\mathcal{M}^{-1}\{\varphi _{1}\}(x) = \int _{0}^{\infty }f(t) g(xt)
\ dt = \int _{1}^{\infty }t^{s_{0}-1} \exp (-xt)\ dt$, and so
\begin{equation*}
	I_{1} = \mathcal{M}^{-1}\{\varphi _{1}\}\left (
	\frac{2 \pi n}{\sqrt{C}} \right ) = \int _{1}^{\infty }t^{s_{0} - 1}
	\exp \left ( \frac{-2\pi n t}{\sqrt{C}} \right ) \ dt,
\end{equation*}
which converges for all $s_{0} \in \mathbb{C}$. Similarly
$I_{2} = \mathcal{M}^{-1}\{\varphi _{2}\}\left (
\frac{2 \pi n}{\sqrt{C}} \right )$ where
$\varphi _{2}(s) = \frac{\Gamma (s)}{s - k + s_{0}}$, and an analogous
argument shows that
\begin{equation*}
	I_{2} = \int _{1}^{\infty }t^{k-1-s_{0}} \exp \left (
	\frac{-2\pi n t}{\sqrt{C}} \right )\ dt.
\end{equation*}
Putting these two pieces together, we see that
\begin{align*}
	\Lambda (s_{0}) &= \sum _{n=1}^{\infty }a_{n} \left ( \int _{1}^{
		\infty }t^{s_{0} - 1} \exp \left ( \frac{-2\pi n t}{\sqrt{C}} \right )
	\ dt + (-1)^{P} \int _{1}^{\infty }t^{k-1-s_{0}} \exp \left (
	\frac{-2\pi n t}{\sqrt{C}} \right )\ dt \right )
	\\
	&= \int _{1}^{\infty }(t^{s_{0}-1} + (-1)^{P} t^{k-1-s_{0}}) \sum _{n=1}^{
		\infty }a_{n} \exp \left ( \frac{-2\pi n t}{\sqrt{C}} \right )\ dt.
\end{align*}
Now, owing to the fact that the $a_{n}$ are the coefficients of the modular
form $f$, i.e. $f(z) = \sum _{n=1}^{\infty }a_{n} \exp (2\pi i n z)$, we
obtain the following well-known result (see e.g.
\cite[pp. 139-141]{koblitz}).
%
\begin{proposition}
	\label{prop:mellin-transform-Lambda}
	For all $s \in \mathbb{C}$,
	\begin{equation*}
		\Lambda (s) = \int _{1}^{\infty }(t^{s-1} + (-1)^{P} t^{k-1-s}) f
		\left ( \frac{it}{\sqrt{C}} \right )\ dt.
	\end{equation*}
\end{proposition}
Lemma \ref{lem:integral-formula-approximation} exhibits a corresponding
contour integral representation of the approximations $\Lambda _{N}$ via
the truncated Euler product $\Lambda _{N}^{Euler}$, which corresponds to
taking a subseries of the $a_{n}$. Defining
$f_{N}(z) = \sum _{n=1}^{\infty }b_{n}^{(N)} \exp (2\pi i n z)$, where
\begin{equation*}
	b_{n}^{(N)} = a_{n} - c_{n}^{(N)} =
	\begin{cases}
		0 & \text{$n$ has a prime factor $> p_{N}$}
		\\
		a_{n} & \text{$n$ is a product of primes $\leq p_{N}$},
	\end{cases}
\end{equation*}
an analogous manipulation then shows that $\Lambda _{N}$ is then given
by a similar convolution-type formula, now with $f_{N}$.
%
\begin{proposition}
	\label{prop:mellin-transform-approximation}
	For all $s \in \mathbb{C}$,
	\begin{equation*}
		\Lambda _{N}(s) = \int _{1}^{\infty }(t^{s-1} + (-1)^{P} t^{k-1-s}) f_{N}
		\left ( \frac{it}{\sqrt{C}} \right )\ dt.
	\end{equation*}
\end{proposition}
Proposition \ref{prop:mellin-transform-approximation} can be reinterpreted
as an alternative explicit definition of $\Lambda _{N}$ valid on all of
$\mathbb{C}$, and gives another perspective on the construction outlined
in Section~\ref{sec:approximation-construction}. Furthermore, combining
Propositions \ref{prop:mellin-transform-Lambda} and \ref{prop:mellin-transform-approximation}, we see that
\begin{equation*}
	\Lambda (s) - \Lambda _{N}(s) = \int _{1}^{\infty }(t^{s_{0}-1} + (-1)^{P}
	t^{k-1-s_{0}}) \sum _{n=1}^{\infty }c_{n}^{(N)} \exp \left (
	\frac{-2\pi n t}{\sqrt{C}} \right )\ dt.
\end{equation*}
Owing to the identity
$\int _{1}^{\infty }t^{z-1} \exp (-at)\ dt = a^{-z} \Gamma (z, a)$ where
$\Gamma (z, a) = \int _{a}^{\infty }t^{z-1} \exp (-t)\ dt$ is the incomplete
gamma function, we may restate the above equation into the following version
of the approximation error formula.
%
\begin{corollary}[Error formula, series form]
	\label{cor:error-bound-incomplete-gamma-function}
	\begin{equation*}
		\Lambda (s) - \Lambda _{N}(s) = \sum _{n=1}^{\infty} c_{n}^{(N)}
		\left ( \left ( \frac{2\pi n}{\sqrt{C}} \right )^{-s} \Gamma \left ( s,
		\frac{2\pi n}{\sqrt{C}} \right ) + (-1)^{P} \left (
		\frac{2\pi n}{\sqrt{C}} \right )^{-k+s} \Gamma \left ( k-s,
		\frac{2\pi n}{\sqrt{C}} \right ) \right )
	\end{equation*}
\end{corollary}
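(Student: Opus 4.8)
The plan is to start from the convolution-type identity for $\Lambda(s) - \Lambda_N(s)$ obtained just above the statement, namely
\[\Lambda(s) - \Lambda_N(s) = \int_1^\infty (t^{s-1} + (-1)^P t^{k-1-s}) \sum_{n=1}^\infty c_n^{(N)} e^{-2\pi n t / \sqrt{C}}\ dt,\]
and to evaluate the right-hand side by interchanging the summation and integration and then recognizing each of the resulting $n$-th integrals as an incomplete gamma value via the identity $\int_1^\infty t^{z-1} e^{-at}\ dt = a^{-z} \Gamma(z, a)$, which itself follows from the substitution $u = at$ in the definition of $\Gamma(z,a)$.

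First I would justify the interchange of sum and integral by Tonelli's theorem. On the ray $t \in [1, \infty)$ each of $t^{\Re(s)-1}$ and $t^{k-1-\Re(s)}$ is dominated by $1 + t^M$ for a suitable exponent $M = M(\Re(s), k)$, so it suffices to verify that $\sum_{n=1}^\infty |c_n^{(N)}| \int_1^\infty (1 + t^M) e^{-2\pi n t/\sqrt{C}}\ dt < \infty$. Since $|c_n^{(N)}| \leq |a_n|$, and the Petersson--Ramanujan bound (or even Hecke's elementary estimate) gives $|a_n| = O(n^{(k-1)/2})$, while the inner integral is $O\bigl(\tfrac{1}{n} e^{-2\pi n/\sqrt{C}}\bigr)$ as $n \to \infty$, the series converges absolutely. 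Hence Tonelli applies and we may write
\[\Lambda(s) - \Lambda_N(s) = \sum_{n=1}^\infty c_n^{(N)} \left( \int_1^\infty t^{s-1} e^{-2\pi n t/\sqrt{C}}\ dt + (-1)^P \int_1^\infty t^{k-1-s} e^{-2\pi n t/\sqrt{C}}\ dt \right).\]

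Next I would apply the incomplete gamma identity to the two inner integrals: to the first with $z = s$ and $a = \tfrac{2\pi n}{\sqrt{C}}$, giving $\bigl(\tfrac{2\pi n}{\sqrt{C}}\bigr)^{-s} \Gamma\bigl(s, \tfrac{2\pi n}{\sqrt{C}}\bigr)$; and to the second with $z = k-s$ and $a = \tfrac{2\pi n}{\sqrt{C}}$, giving $\bigl(\tfrac{2\pi n}{\sqrt{C}}\bigr)^{-k+s} \Gamma\bigl(k-s, \tfrac{2\pi n}{\sqrt{C}}\bigr)$. Substituting these back into the series yields exactly the claimed formula. The argument is essentially routine; the only point that requires any care is the absolute-convergence bound underlying the interchange of sum and integral, and even that reduces to the standard polynomial growth of the Fourier coefficients together with the rapid decay of $e^{-2\pi n t/\sqrt{C}}$ on $[1,\infty)$. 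It is worth remarking that, just as the integral representation in Proposition \ref{prop:mellin-transform-approximation} is valid for all $s \in \C$, the resulting series converges for all $s$, since $\Gamma(z, a)$ is entire in $z$ for fixed $a > 0$ and decays rapidly as $a \to \infty$.
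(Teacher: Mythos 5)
Your proposal is correct and follows exactly the route the paper takes: combine the two Mellin-transform propositions into the integral formula for $\Lambda(s) - \Lambda_N(s)$, exchange sum and integral, and recognize each term via $\int_1^\infty t^{z-1} e^{-at}\,dt = a^{-z}\Gamma(z,a)$. The one thing you add beyond the paper's terse derivation is the explicit Tonelli justification for the interchange, using $|c_n^{(N)}| \leq |a_n| = O(n^{(k-1)/2})$ against the exponential decay $O\bigl(n^{-1} e^{-2\pi n/\sqrt{C}}\bigr)$ of the inner integral; the paper leaves this implicit, and your check is correct.
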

We note that Corollary \ref{cor:error-bound-incomplete-gamma-function} is essentially the analog
of Theorem 1 in \cite{alzergani2023family}, Theorem 5 in
\cite{nastasescu2023visual}, and Lemma 3.4 in
\cite{nastasescu2023riemann}. Interpreting the approximation through the
lens of Mellin transforms and via the convolution integral definition may
help to explain the appearance of formulas of this type in the context
of L-function approximations.

Corollary \ref{cor:error-bound-incomplete-gamma-function} implies that
the decay of the error of the approximation is at least exponential in
$N$, in the following sense.
%
\begin{proposition}
	\label{prop:error-first-term-exponential-decay}
	Fix $s \in \mathbb{C}$. For large enough $N$, the first nonzero term
	$T(N)$ in the series expansion of $\Lambda (s) - \Lambda _{N}(s)$ decays
	superexponentially as a function of $N$. The same is true for the first
	nonzero term in the series expansion of
	$\Lambda ^{(n)}(s) - \Lambda _{N}^{(n)}(s)$.
\end{proposition}
\begin{proof}
	The first nonzero term $T(N)$ of the series is the $n = p_{N+1}$ term.
	Writing $\sigma = \operatorname{Re}(s)$, by Theorem \ref{thm:petersson-ramanujan}, $T$ is bounded by
	\begin{equation*}
		\begin{aligned}
			|T(N)| &\leq 2 p_{N+1}^{(k-1)/2}
			\\
			&\quad {}\times\left ( \left (
			\frac{2\pi p_{N+1}}{\sqrt{C}} \right )^{-\sigma} \left | \Gamma
			\left ( s, \frac{2\pi p_{N+1}}{\sqrt{C}} \right ) \right | + \left (
			\frac{2\pi p_{N+1}}{\sqrt{C}} \right )^{\sigma - k} \left | \Gamma
			\left ( k - s, \frac{2\pi p_{N+1}}{\sqrt{C}} \right ) \right |
			\right ).
		\end{aligned}
	\end{equation*}
	For any $\sigma \in \mathbb{R}$, there is a positive real number
	$m(\sigma )$ so that $t^{\sigma - 1} < e^{t/2}$ for all
	$t > m(\sigma )$. Thus, whenever $a > m$,
	\begin{equation*}
		|\Gamma (s, a)| \leq \int _{a}^{\infty }t^{\sigma - 1} e^{-t}\ dt
		\leq \int _{a}^{\infty }e^{-t/2}\ dt = 2e^{-a/2}
	\end{equation*}
	and similarly for $|\Gamma (k-s, a)|$. It follows that whenever $N$ is
	large enough so that
	$\frac{2\pi p_{N+1}}{\sqrt{C}} > \max (m(\sigma ), m(k-\sigma ))$,
	\begin{equation*}
		|T(N)| \leq 4 e^{-\pi p_{N+1} / \sqrt{C}} p_{N+1}^{(k-1)/2} \left (
		\left ( \frac{2\pi p_{N+1}}{\sqrt{C}} \right )^{-\sigma} + \left (
		\frac{2\pi p_{N+1}}{\sqrt{C}} \right )^{\sigma - k} \right ).
	\end{equation*}
	Since $n \log n < p_{n} < n (\log n + \log \log n)$ for $n \geq 6$ by a
	theorem of Rosser \cite{rosser}, for $N \geq 5$ we have
	\begin{align*}
		|T(N)| &< 4 (N+1)^{-\pi (N+1) / \sqrt{C}} ((N+1) (\log (N+1) + \log
		\log (N+1)))^{(k-1)/2}
		\\
		&\qquad \times \left ( \left ( \frac{2\pi (N+1) \log (N+1)}{\sqrt{C}}
		\right )^{-\sigma} + \left ( \frac{2\pi (N+1) \log (N+1)}{\sqrt{C}}
		\right )^{\sigma - k} \right )
	\end{align*}
	for $s$ in the strip $0 < \operatorname{Re}(s) < k$. A similar bound holds
	for $s$ to the left or right of this strip by straightforwardly modifying
	the terms in the second line of the equation, yielding the first claim.
	The argument for $\Lambda ^{(n)}(s) - \Lambda _{N}^{(n)}(s)$ is similar:
	in this case $|T(N)|$ can be bounded by $2p_{N+1}^{(k-1)/2}$ times a sum
	of products of $s$-derivatives of
	$\left ( \frac{2\pi p_{N+1}}{\sqrt{C}} \right )^{-s}$,
	$\left ( \frac{2\pi p_{N+1}}{\sqrt{C}} \right )^{s-k}$,
	$\Gamma \left ( s, \frac{2\pi p_{N+1}}{\sqrt{C}} \right )$, and
	$\Gamma \left ( k-s, \frac{2\pi p_{N+1}}{\sqrt{C}} \right )$, so it suffices
	to bound the derivatives of $\Gamma (s, a)$. Since
	$\frac{d^{n}}{ds^{n}} \Gamma (s, a) = \int _{a}^{\infty }t^{s-1} (
	\log t)^{n} e^{-t}\ dt$, this derivative is bounded by $2 e^{-a/2}$ whenever
	$a > \max (1, m(\sigma + n))$, and the second statement of the proposition
	follows.
\end{proof}
The proof of Proposition \ref{prop:error-first-term-exponential-decay} can be straightforwardly
adapted -- analyzing each term of the series using the exponential bound
on $\Gamma (s, a)$ and its derivatives -- to show that, for any fixed
$s \in \mathbb{C}$, the overall error
$|\Lambda (s) - \Lambda _{N}(s)|$ or
$|\Lambda ^{(n)}(s) - \Lambda _{N}^{(n)}(s)|$ is bounded by an at least
exponentially decreasing function of $N$ for large enough $N$.

%

\section*{Acknowledgements}
We thank Yuri Matiyasevich for bringing to our attention an error in the proof of Lemma \ref{lem11}. We sincerely thank the anonymous referee for their valuable feedback on
the manuscript. We thank Bogdan Stoica for his helpful comments on an earlier
version of the paper. The second author also thanks Tariq Osman for conversations
which inspired the results in Section~\ref{sec:mellin-transforms} of this
paper. A.H. was supported by the Simons Collaboration for Mathematicians
under Award Number 708790.
K.S. was supported by the 2024 Brandeis University
Harold I. Levine Endowed Fellowship.

\appendix

\section{Detailed proofs of results in Section \ref{sec:approximation-construction}}
\label{app:detailed-arguments}

Here, we provide more detailed proofs of the results in Section~\ref{sec:approximation-construction}. First we focus on proving the convergence
of the sum defining $\Lambda _{N}^{pp}(s)$.

\begin{lemma}
\label{lem:pp-Lpn-convergent}
The function
$\Lambda _{N, n}^{pp}(s) = \sum _{s_{\star }
	\text{ pole of $L_{p_{n}}(s)$}} \sum _{k = -k_{s_{\star}}}^{-1} \rho _{s_{
		\star}}^{(k)} (s-s_{\star})^{k}$ is convergent for $s$ not a pole of
$L_{p_{n}}(s)$.
\end{lemma}
\begin{proof}
For $s_{\star}$ a pole of $L_{p}(s)$, define
$\Lambda _{N}^{pp, s_{\star}}$ to be the principal part of the Laurent
expansion of $\Lambda _{N}$ at $s = s_{\star}$. Taking a Laurent expansion
of each factor, we have
\begin{align*}
	\Lambda _{N}^{Euler}(s) &= g(s) \prod _{p=2}^{p_{N}} L_{p}(s) = g(s)
	\prod _{p=2}^{p_{N}} \frac{1}{1 - \alpha _{1,p} p^{-s}}
	\frac{1}{1 - \alpha _{2,p} p^{-s}}
	\\
	&= \left ( \sum _{k=0}^{\infty }\rho _{g, s_{\star}}^{(k)} (s - s_{
		\star})^{k} \right ) \prod _{p=2}^{p_{N}} \left [ \left ( \sum _{k=-1}^{
		\infty }\rho _{p_{n}, 1, s_{\star}}^{(k)} (s - s_{\star})^{k} \right ) \left ( \sum _{k=-1}^{\infty }\rho _{p_{n}, 2, s_{\star}}^{(k)} (s - s_{
		\star})^{k} \right ) \right ]
	\\
	&= \sum _{k=-2N}^{\infty }\sum _{
		\substack{A \in \mathbb{Z}_{\geq -1}^{2N+1} \\ |A| = k}} \rho _{s_{
			\star}}^{(A)} (s - s_{\star})^{k}
\end{align*}
where
$\rho _{s_{\star}}^{(A)} = \rho _{p_{1}, 1, s_{\star}}^{(a_{1})}
\cdots \rho _{p_{N}, 1, s_{\star}}^{(a_{N})} \rho _{p_{1}, 2, s_{
		\star}}^{(a_{N+1})} \cdots \rho _{p_{N}, 2, s_{\star}}^{(a_{2N})}
\rho _{g, s_{\star}}^{(a_{2N+1})}$. Hence the principal part at
$s_{\star}$ is
$\Lambda _{N}^{pp, s_{\star}}(s) = \sum _{k=-2N}^{-1} \sum _{
	\substack{A \in \mathbb{Z}_{\geq -1}^{2N+1} \\ |A| = k}} \rho _{s_{
		\star}}^{(A)} (s - s_{\star})^{k}$ and so
\begin{equation*}
	\Lambda _{N,n}^{pp}(s) = \sum _{k = -2N}^{-1} \sum _{s_{\star }
		\text{ pole of $L_{p_{n}}(s)$}} \rho _{s_{\star}}^{(A)} (s - s_{\star})^{k}.
\end{equation*}
We set
\begin{equation*}
	M = \max _{
		\substack{1 \leq i \leq N \\ j = 1 \text{ or } 2 \\ -1 \leq k \leq 2N-1 \\ s_{\star }\text{pole of $L_{p_{n}}$}}}
	\left | \frac{\rho _{p_{i}, j, s_{\star}}^{(k)}}{s_{\star}^{2k+2}}
	\right |,
\end{equation*}
which is finite by Lemma \ref{lem:laurent-coefficients-bounded-growth}, and in plain language is
the maximum over all the possible Laurent coefficients of the
$L_{p_{n}}$ factors which could appear in $\rho _{s_{\star}}^{(A)}$, normalized
by an appropriate power of $|s_{\star}|$. This gives the crude bound
\begin{equation*}
	|\Lambda _{N,n}^{pp}(s)| \leq \sum _{k = -2N}^{-1} \sum _{\ell = 0}^{2N-1}
	c_{\ell }M^{2N} \sum _{s_{\star }\text{ pole of $L_{p_{n}}(s)$}} |
	\rho _{g, s_{\star}}^{(\ell )}| (1 + |s_{\star}|)^{p_{\ell}} |s - s_{
		\star}|^{k}
\end{equation*}
where $c_{\ell}$ is some combinatorial constant counting the number of
multiindices $A$ whose last entry is $\ell $ and $p_{\ell}$ is the maximum
of $\sum _{i=1}^{2N} (2a_{i}+2)$ over such multiindices $A$. The main control
will come from the $\rho _{g, s_{\star}}^{(a_{2N+1})}$, and each term in
the expansion of this coefficient will contain some derivative of
$\Gamma (s)$.

For any $a>0$, the function $\Gamma (a + iy)$ is the Fourier transform
of a Schwartz function, hence Schwartz itself. Via the functional equation
$z\Gamma (z) = \Gamma (z+1)$, it follows that $\Gamma $ has the Schwartz
decay property on any vertical line in the complex plane, and so does each
derivative of $\Gamma $. Hence, the
$\rho _{g, s_{\star}}^{(b_{N+1})}$ in each term of
$\Lambda _{N}^{pp, s_{\star}}(s)$ decays rapidly as
$\operatorname{Im}s_{\star }\to \pm \infty $, ensuring
$\Lambda _{N, n}^{pp}(s) = \sum _{s_{\star }
	\text{ pole of $L_{p_{n}}(s)$}} \Lambda _{N}^{pp, s_{\star}}(s)$ converges,
because the poles of each single $L_{p}(s)$ form two (possibly coinciding)
sets on the line $\operatorname{Re}(s) = \frac{k-1}{2}$ spaced at uniform
distance.
\end{proof}

\begin{lemma}
\label{lem:pp-g-convergent}
The function
$\Lambda _{N,g}^{pp}(s) = \sum _{s_{\star }\text{ pole of g(s)}} \sum _{k
	= -k_{s_{\star}}}^{-1} \rho _{s_{\star}}^{(k)} (s - s_{\star})^{k}$ is
convergent for $s$ not a pole of $g(s)$.
\end{lemma}
\begin{proof}
Since $\Gamma $ has simple poles at the nonpositive integers, so does
$\Lambda _{N}^{Euler}$, and so
\begin{align*}
	\sum _{s_{\star }\in \mathbb{Z}_{\leq 0}} \Lambda _{N}^{pp, s_{\star}}(s)
	&= \sum _{-n \in \mathbb{Z}_{\leq 0}} \operatorname{Res}_{s=-n}(
	\Lambda _{N}^{Euler}(s)) (s+n)^{-1}
	\\
	&= \sum _{-n \in \mathbb{Z}_{\leq 0}} (\operatorname{Res}_{s=-n} g(s))
	\prod _{p=2}^{p_{N}} L_{p}(-n) (s+n)^{-1}
	\\
	&= \sum _{-n \in \mathbb{Z}_{\leq 0}} C^{-n/2} (2\pi )^{n} (
	\operatorname{Res}_{s=-n} \Gamma (s)) \prod _{p=2}^{p_{N}} L_{p}(-n) (s+n)^{-1}
	\\
	&= \sum _{-n \in \mathbb{Z}_{\leq 0}}
	\frac{(-1)^{n} (2\pi )^{n} \prod _{p=2}^{p_{N}} L_{p}(-n)}{C^{n/2} n!}
	(s+n)^{-1}.
\end{align*}
Since each $|L_{p}(-n)|$ is eventually decreasing as $n \to \infty $, their
product can be bounded uniformly for all $n$, and therefore the $n!$ term
in the denominator ensures convergence.
\end{proof}

With these tools in hand, Proposition \ref{prop:lambda-pp-convergent-sparse-contour-bound} proves the convergence
of $\Lambda _{N}^{pp}(s)$ whenever $s$ is not a pole of
$\Lambda _{N}^{Euler}$. Next we turn our attention to proving in detail
the bound of $\Lambda _{N}^{pp}(s)$ on sparse contours.

\begin{definition}
\label{defn2}
Let $N$ be fixed, and let $a, b > 0$ be constants depending only on
$N$. We say that a closed contour $\mathcal{C}$ in $\mathbb{C}$ is
$(a,b)$-sparse with respect to $S_{N}$ if for all
$s \in \mathcal{C}$,
\begin{equation*}
	\min _{s_{\star }\in D_{N}} |s - s_{\star}| \geq a \quad \text{and}
	\quad \min _{s_{\star }\in \mathbb{Z}_{\leq 0}} |s - s_{\star}| \geq b.
\end{equation*}
\end{definition}
%
\begin{lemma}
\label{lem11}
For some $a, b > 0$ depending only on $N$, there exist arbitrarily large
rectangular contours $\mathcal{C}$ which are $(a,b)$-sparse with respect
to $S_{N}$.
\end{lemma}
\begin{proof}
If the sides of the rectangle are defined by the lines
$\operatorname{Re}(s) = \sigma _{1} < 0$,
$\operatorname{Re}(s) = \sigma _{2} > 0$,
$\operatorname{Im}(s) = \tau _{1} < 0$, and
$\operatorname{Im}(s) = \tau _{2} > 0$, then it suffices to show that we
can make $|\sigma _{1}|, |\sigma _{2}|, |\tau _{1}|, |\tau _{2}|$ arbitrarily
large for some fixed $a$ and $b$. Picking
$\sigma _{1} = \ell - \frac{1}{2}$ for
$\ell \in \mathbb{Z}_{\leq 0}$ and $\sigma _{2} > 1$ arbitrarily large
clearly achieves the second condition of a sparse contour with
$b = \frac{1}{2}$. For the first condition, we proceed as follows. Consider
an arbitrary interval $I$ of length $1$, and let
$D_{I} = \{s_{\star }\in D_{N} \ \big |\ \operatorname{Im}(s_{\star})
\in I\}$. Since the poles of a single $L_{p}$ comprise two (possibly coinciding)
sets on the vertical line $\operatorname{Re}(s) = \frac{k-1}{2}$, each
spaced by a distance of $\frac{2\pi}{\log p}$ apart on the line, it follows
that each $L_{p}$ contributes at most
$2 \frac{\log p}{2 \pi} + 2 \leq \frac{\log p_{N}}{\pi} + 2$ elements to
$D_{I}$, hence $\#(D_{I}) \leq \frac{N \log p_{N}}{\pi} + 2N$. The average length of the subintervals of $I$ marked by the elements of $D_I$ is thus
at least $\frac{\pi}{N \log(p_N) + (2N + 1) \pi}$, so there is $\tau _{1} \in I$ such
that the line $\operatorname{Im}(s) = \tau _{1}$ remains at least distance
$\frac{\pi}{2N \log(p_N) + (4N+2) \pi}$ away from all elements of $D_{N}$. The same
is true for $\tau _{2}$, so $a = \frac{\pi}{2N \log(p_N) + (4N+2) \pi}$ satisfies
the required condition.
\end{proof}

Hereafter, we suppress the constants $a$ and $b$ in the notation and refer
to $(a,b)$-sparse contours simply as sparse contours. To control the magnitude
of $\Lambda _{N}^{pp}$ on sparse contours, we start with a quick technical
lemma.
%
\begin{lemma}
\label{lem:technical-bounding-lemma}
Let $k \geq 1$ and suppose that $|s - s_{\star}| \geq c$ and
$|s_{\star}| \geq d$. Then
\begin{equation*}
	\frac{1 + |s|}{|s_{\star}|^{k} |s - s_{\star}|^{k}} \leq
	\frac{1}{c^{k} d^{k}} + \left ( \frac{1}{c} + \frac{1}{d} \right )
	\frac{1}{c^{k-1} d^{k-1}}.
\end{equation*}
\end{lemma}
\begin{proof}
\begin{align*}
	\frac{1 + |s|}{|s_{\star}|^{k} |s - s_{\star}|^{k}} &=
	\frac{1}{|s_{\star}|^{k} |s - s_{\star}|^{k}} +
	\frac{|s|}{|s_{\star}|^{k} |s - s_{\star}|^{k}}
	\\
	&\leq \frac{1}{c^{k} d^{k}} + \frac{|s|}{|s_{\star}| |s - s_{\star}|}
	\frac{1}{|s_{\star}|^{k-1} |s - s_{\star}|^{k-1}}
	\\
	&= \frac{1}{c^{k} d^{k}} + \left | \frac{1}{s - s_{\star}} +
	\frac{1}{s_{\star}} \right |
	\frac{1}{|s_{\star}|^{k-1} |s - s_{\star}|^{k-1}}
	\\
	&\leq \frac{1}{c^{k} d^{k}} + \left ( \frac{1}{c} + \frac{1}{d}
	\right ) \frac{1}{c^{k-1} d^{k-1}}.\qedhere
\end{align*}
\end{proof}

With this technical lemma in hand, we can now produce bounds for
$\Lambda _{N,g}$ and each $\Lambda _{N, n}$ on sparse contours.
%
\begin{lemma}
\label{lem:pp-g-sparse-contour-bound}
For $s \in \mathcal{C}$ a rectangular sparse contour,
$|\Lambda _{N,g}^{pp}(s)| \leq \frac{K_{g}}{1+|s|}$ where $K_{g}$ is a
constant depending only on the $L$-function and $N$.
\end{lemma}
\begin{proof}
It is equivalent to show that $(1+|s|) |\Lambda _{N,g}^{pp}(s)|$ is bounded
above by a constant. We know from a proof of an earlier lemma that
\begin{equation*}
	\Lambda _{N,g}^{pp}(s) = \sum _{n \geq 0}
	\frac{(-1)^{n} (2\pi )^{n}}{C^{n/2} n!} \prod _{p=2}^{p_{N}} L_{p}(-n)
	(s+n)^{-1}.
\end{equation*}
Since all the $L_{p}(-n)$'s are eventually decreasing, we may set
\begin{equation*}
	M = \max _{n \in \mathbb{Z}_{\geq 0}} \prod _{p=2}^{p_{N}} |L_{p}(-n)|
\end{equation*}
to obtain
\begin{equation*}
	|\Lambda _{N,g}^{pp}(s)| \leq M \sum _{n \geq 0} \left (
	\frac{2\pi}{\sqrt{C}} \right )^{n} \frac{1}{n!} \frac{1}{|s+n|},
\end{equation*}
and therefore
\begin{equation*}
	(1+|s|) |\Lambda _{N,g}^{pp}(s)| \leq M \sum _{n \geq 0} \left (
	\frac{2\pi}{\sqrt{C}} \right )^{n} \frac{1}{(n-1)!}
	\frac{1+|s|}{n |s+n|}
\end{equation*}
where we make the convention that
$\frac{1}{(n-1)!} \frac{1+|s|}{n|s+n|} = \frac{1+|s|}{|s|}$ if $n=0$. For
$n \geq 1$, we apply Lemma \ref{lem:technical-bounding-lemma} with
$k = 1$ and $s_{\star }= -n$. By definition of a sparse contour, the constants
appearing in the hypotheses of the Lemma are independent of $s$ and
$n$, so $\frac{1+|s|}{n|s+n|}$ is bounded by a constant independent of
$s$ and $n$. Therefore the $(n-1)!$ term ensures convergence of the sum,
and the result is that $(1+|s|) |\Lambda _{N,g}^{pp}(s)|$ is bounded by
a constant independent of $s$, as was required.
\end{proof}

\begin{lemma}
\label{lem:pp-Lpn-sparse-contour-bound}
For $s \in \mathcal{C}$ a rectangular sparse contour,
$|\Lambda _{N,n}^{pp}(s)| \leq \frac{K_{n}}{1+|s|}$ where $K_{n}$ is a
constant depending only on the $L$-function, $N$, and $n$.
\end{lemma}
\begin{proof}
From the proof of Lemma \ref{lem:pp-Lpn-convergent}, we have the bound
\begin{equation*}
	|\Lambda _{N,n}^{pp}(s)| \leq \sum _{k = 1}^{2N} \sum _{\ell = 0}^{2N-1}
	c_{\ell }M^{2N} S_{k, \ell}(s)
\end{equation*}
where
$S_{k, \ell}(s) = \sum _{s_{\star }\text{ pole of $L_{p_{n}}(s)$}} |
\rho _{g, s_{\star}}^{(\ell )}| (1 + |s_{\star}|)^{p_{\ell}} |s - s_{
	\star}|^{-k}$. It thus suffices to prove that
$(1+|s|) S_{k,\ell}(s)$ is bounded by a constant depending only on
$k$ and $\ell $. Consider
\begin{align*}
	(1 + |s|) S_{k, \ell}(s) &= \sum _{s_{\star }
		\text{ pole of $L_{p_{n}}(s)$}} |\rho _{g, s_{\star}}^{(\ell )}| (1 + |s_{
		\star}|)^{p_{\ell}} \frac{1 + |s|}{|s - s_{\star}|^{k}}
	\\
	&= \sum _{s_{\star }\text{ pole of $L_{p_{n}}(s)$}} |\rho _{g, s_{
			\star}}^{(\ell )}| (1 + |s_{\star}|)^{p_{\ell}} |s_{\star}|^{k}
	\frac{1 + |s|}{|s_{\star}|^{k} |s - s_{\star}|^{k}}.
\end{align*}
Recall that the $s_{\star}$ comprise one set of double poles or two sets
of simple poles on the line $\operatorname{Re}(s) = \frac{k-1}{2}$. Since
$g$ decays rapidly on the vertical line
$\operatorname{Re}(s) = \frac{k-1}{2}$, this means that the sum
\begin{equation*}
	\sum _{s_{\star }\text{ pole of $L_{p_{n}}(s)$}} |\rho _{g, s_{\star}}^{(
		\ell )}| (1 + |s_{\star}|)^{p_{\ell}} |s_{\star}|^{k}
\end{equation*}
converges and its value depends only on $k$ and $\ell $, and so it suffices
to prove that $\frac{1 + |s|}{|s_{\star}|^{k} |s - s_{\star}|^{k}}$ is
bounded above by a constant independent of $s$ and $s_{\star}$. By definition
of a sparse contour $|s - s_{\star}|$ is bounded below by a constant independent
of $s$ and $s_{\star}$, and since every $s_{\star}$ lies on the line
$\operatorname{Re}(s) = \frac{k-1}{2}$, we have
$|s_{\star}| \geq \frac{k-1}{2}$ independently of $s_{\star}$. So applying
Lemma \ref{lem:technical-bounding-lemma}, the constants appearing in the
hypotheses of the Lemma are independent of $s$ and $s_{\star}$, and therefore
the statement follows.
\end{proof}
Once again, with these bounds in hand on each piece, Proposition \ref{prop:lambda-pp-convergent-sparse-contour-bound} now proves that
$\Lambda _{N}^{pp}(s)$ is bounded by $\frac{K}{1 + |s|}$ for some constant
$K$ depending only on the L-function and $N$, whenever $s$ belongs to a
sparse contour.

\printbibliography
	
\end{document}